\theoremstyle{plain} 
\newtheorem{theorem}{\indent\sc Theorem}[section] 
\newtheorem{lemma}[theorem]{\indent\sc Lemma}
\theoremstyle{definition} 
\newtheorem{remark}[theorem]{\indent\sc Remark}
\def\address#1#2{\begingroup
\noindent\parbox[t]{7.8cm}{%
\small{\scshape\ignorespaces#1}\par\vskip1ex
\noindent\small{\itshape E-mail address}%
\/: #2\par\vskip4ex}\hfill%
\endgroup}%
\title{\uppercase{Singularities of parallel surfaces}} 
\author{
%
\textsc{Toshizumi Fukui and Masaru Hasegawa} 
}
\date{} 
\newcommand{\corR}[1]{#1}
\newcommand{\corC}[1]{#1}
\newcommand{\colR}[1]{#1}
\newcommand{\colB}[1]{#1}
\newcommand{\colRR}[1]{#1}
\newcommand{\BB}[1]{\boldsymbol{#1}}
\newcommand{\pd}[2]{\frac{\partial#1}{\partial#2}}
\def\rank{\operatorname{rank}}
\renewcommand\section{\@startsection{section}{1}{\parindent}%
                                     {-3.5ex \@plus -1ex \@minus -.2ex}%
                                     {-2.3ex \@plus -.2ex}%
                                     {\normalfont\normalsize\bfseries}}
\renewcommand\subsection{\@startsection{subsection}{2}{\parindent}%
                                     {-3.25ex\@plus -1ex \@minus -.2ex}%
                                     {-1.5ex \@plus -.2ex}%
                                     {\normalfont\normalsize\bfseries}}
\begin{document}

\maketitle

\footnote{ 
2000 \textit{Mathematics Subject Classification}.
Primary \corR{53A05}; Secondary \corR{58K05, 58K35}.
}
\footnote{ 
\textit{Key words and phrases}.
\corR{Parallel surface, Versality of distance squared functions.}
}
\begin{abstract}
 We investigate singularities of all parallel surfaces to a given
 regular surface.
 In generic context, the types of singularities of parallel surfaces
 are cuspidal edge, swallowtail, cuspidal lips, cuspidal beaks, cuspidal
 butterfly and 3-dimensional $D_4^\pm$ singularities.
 We give criteria for these singularities types in terms of differential
 geometry (Theorem~\ref{thm:main1} and \ref{cor:front}). 
\end{abstract}

\section{Introduction} 
Classically, a wave front is the locus
of points having the same phase of vibration. 
A wave front is described by Huygens principle: 
The wave front of a propagating wave of light at any instant conforms to
the envelope of spherical wavelets emanating from every point on the
wave front at the prior instant (with the understanding that the
wavelets have the same speed as the overall wave).

It is well known that a wave front 
may have singularities at some moment. 
Singularities of wave fronts are classified in generic context   
(see \cite[p.~336]{A1}).
The local classification of bifurcations in generic one parameter
families of fronts in 3-dimensional spaces are also given in
\cite[p.~348]{A1}. 
To understand their singularities, it is important to know when the
given front is generic and when the given one parameter family is
generic.

In the differential geometric context, a wave front can be described as
the parallel surface
\[
g^t:U\to \BB{R}^3,\qquad g^{t}(u,v):=g(u,v)+t{\bf n}(u,v),
\]
of a regular surface $g:U\to\BB{R}^3$ at time $t$. 
Here $U$ is an open set of $\BB{R}^2$ and ${\bf n}$ denotes the unit
normal vector given by ${\bf n}=(g_{u}\times g_{v})/\|g_{u}\times
g_{v}\|$. 
It is well known that when $t$ is either of the principal radii of
curvature at a point of the initial surface $g$, the parallel surface
$g^t$ has a singularity at the corresponding  point (see, for example,
\cite{o1}). 
So singularities of parallel surfaces should be investigated in terms of
differential geometry of the regular map $g$.  

By Huygens principle, the wave front can be seen as the
discriminant set of the distance squared unfolding
\[
 \Phi^t: U\times\BB{R}^3\to\BB{R}, \qquad 
 (u,v,x,y,z)\mapsto -\frac12\left({\|(x,y,z)-g(u,v)\|^2-{t_0}^2}\right),
\]
where $t_0$ is a constant.
Porteous \cite{p1,p2} investigated the (Thom-Boardman) singularities of
the unfolding $(u,v,x,y,z)\mapsto\Phi^t+\frac12\|(x,y,z)\|^2$ with
$t_0=0$. 
He discovered that the notion of normal vectors, principal radii of
curvature, and umbilics correspond to $A_1$-singularities,
$A_2$-singularities, and $D_4$-singularities or worse, respectively.
Moreover, he discovered the notion of ridge points corresponding to
$A_3$-singularities or worse. 

It is now natural to ask a description of the singularity types 
of $g^t$ in terms of differential geometry, which we answer        
in this paper.  
We fix a general regular map $g$ and 
investigate singularities of $g^t$ for all $t$. 
In other words, we investigate changes of singularities due to 
time evolution of fronts generated by $g$.
To do this we need the notion of sub-parabolic point\corC{s} which is
introduced by Bruce and Wilkinson \cite{bw1} to study singularities of
folding map\corR{s}.
The main theorem (Theorem \ref{thm:main1}) states  
criteria of the singularity types of $g^t$ for all $t$ in terms of
differential geometry.   
For example, we show that, at a first order ridge point,   
$g^t$ has swallowtail singularity when it is not sub-parabolic  
where $t$ is the corresponding principal radius of curvature.  
This is enough to find \corR{a} normal form when $\Phi^t$ is an
unfolding of $A_1$, $A_2$, \corC{and} $A_3$ singularities. 
This is proved \corR{by given} a characterization 
\corR{for} the unfolding $\Phi^t$ \corR{to be} $\mathcal{K}$-versal 
in terms of differential geometry.  

We now know that $\Phi^t$ is not a $\mathcal{K}$-versal unfolding at a
sub-parabolic ridge point, \corR{a} higher order ridge, 
\corR{and an} umbilic. At these points, we are interested in the
unfolding $\Phi$ defined by
\[
\Phi: U\times\BB{R}^4\to\BB{R}, \qquad 
(u,v,x,y,z,t)\mapsto -\frac12\left(\|(x,y,z)-g(u,v)\|^2-t^2\right).
\]
Theorem \ref{thm:main1} also gives a characterization 
\corR{for} the unfolding $\Phi$ \corR{to be} $\mathcal{K}$-versal in
terms of differential geometry.  
For example, at a ridge point, we show that 
$\Phi$ is $\mathcal{K}$-versal without any other condition.  
The parallel surface is the section of discriminant set of this unfolding
with the hyperplane defined by $t=$ constant. 
\corC{For $A_4$-singularities, that is, at a second order ridge point,}
we also show (Theorem~\corC{\ref{cor:front}~(1)}) that $g^t$ has cuspidal
butterfly when \corC{it is not sub-parabolic where} $t$ is the
corresponding principal radius of curvature. 
\corC{At a sub-parabolic ridge point where $\Phi^t$ fails to be
$\mathcal{K}$-versal}, we show (Theorem~\corC{\ref{cor:front}~(2)}) the
singularities of $g^t$ are cuspidal beaks or cuspidal lips when the
corresponding CPC (constant principal curvature) line\corR{s are} Morse
singularities. For $D_4$-singularities, we also show \corR{a} similar
result (Theorem \corC{\ref{cor:front}~(3)}). These results are satisfactory in the
context of generic differential geometry.   





\section{Preliminary from differential geometry}
We recall some differential geometric notions and their properties of
regular surfaces in Euclidean space, which we need in 
\corC{this} paper. 
We present the definitions of ridge points, sub-parabolic points and umbilics,
and their fundamental properties. 
We then discuss 
constant principal curvature (CPC) lines, 
which are the
locus of singular points of the 
parallel surface. 
We state \corR{a} characterization of these notions in terms of the
coefficients of \corR{a} 
Monge normal form of the surface.

\subsection{Fundamental forms}
Consider a surface $g$ defined by the Monge form: 
\begin{align}
 \label{eq:monge}
 g(u,v)=\left(u,v,f(u,v)\right),\qquad 
 f(u,v)=\frac{1}{2}(k_1u^2+k_2v^2)
 +\sum\limits_{i+j\geq 3}\frac{1}{i!j!}a_{ij}u^{i}v^{j}.
\end{align}
The coefficient\corR{s} of the first fundamental form \corR{are} given by
\[
E=\langle g_u,g_u\rangle=1+{f_u}^2,\qquad
F=\langle g_u,g_v\rangle={f_u}{f_v},\qquad
G=\langle g_v,g_v\rangle=1+{f_v}^2. 
\]
Here subscripts denote\corR{s} partial derivatives \corR{and}
$\langle\,,\,\rangle$ denote\corR{s} the Euclidean inner product of
$\BB{R}^{3}$. The unit normal vector is given by 
\[
{\bf n}=\frac1{\sqrt{1+{f_u}^2+{f_v}^2}}(-f_u,-f_v,1).
\]
The coefficient\corR{s} of the second fundamental form \corR{are} given by
\colB{\begin{align*}
 {\small
 L=\langle g_{uu},{\bf
 n}\rangle=\frac{f_{uu}}{\sqrt{1+{f_u}^2+{f_v}^2}},\ \   
 M=\langle g_{uv},{\bf
 n}\rangle=\frac{f_{uv}}{\sqrt{1+{f_u}^2+{f_v}^2}},\ \ 
 N=\langle g_{vv},{\bf
 n}\rangle=\frac{f_{vv}}{\sqrt{1+{f_u}^2+{f_v}^2}}.
 }
\end{align*}}

\subsection{Principal curvatures}
We say that $\kappa$ is a \corR{{\it principal curvature}} 
if there is \corR{a} non-zero vector $(\xi,\zeta)$ such that 
\colB{\begin{equation}
 \label{eq:PriVec}
 \begin{pmatrix}
  L&M\\
  M&N
 \end{pmatrix}
 \begin{pmatrix}
  \xi\\\zeta
 \end{pmatrix}
 =\kappa
 \begin{pmatrix}
  E&F\\
  F&G
 \end{pmatrix}
 \begin{pmatrix}
  \xi\\\zeta
 \end{pmatrix}.
\end{equation}}
This is rewritten as 
$$
\frac1{(1+{f_u}^2+{f_v}^2)^{3/2}
}\begin{pmatrix}
1+{f_v}^2&-f_uf_v\\
-f_uf_v&1+{f_u}^2
 \end{pmatrix}
\begin{pmatrix}
f_{uu}&f_{uv}\\
f_{uv}&f_{vv}
\end{pmatrix}
\begin{pmatrix}
\xi\\\zeta
\end{pmatrix}
=
\kappa
\begin{pmatrix}
\xi\\\zeta
\end{pmatrix}.
$$

The eigenvector $(\xi_i,\zeta_i)$ $(i=1,2)$ of the equation
\eqref{eq:PriVec} corresponding to the eigenvalue $\kappa_{i}$ gives the 
principal vector ${\bf v}_i$. We can choose them so that the tangent
vectors $\xi_ig_u+\zeta_ig_v$ are of \corR{the} unit length.
At a point on the surface where two principal curvatures are distinct,
there are two principal \corC{vectors} and these \corC{vectors} are
mutually orthogonal. These principal \corC{vectors} are often colored
(blue or red) to distinguish between the two \corC{vectors}. We assume
that ${\bf v}_1$ is the blue principal \corC{vector} and ${\bf v}_2$ is
the red principal \corC{vector}.

\colB{Suppose that $k_1\ne k_2$, ${\bf v}_1=(1,0)$, and ${\bf
v}_2=(0,1)$.}
\colB{The principal curvature $\kappa_1$ is expressed as
 \begin{align}
  \label{eq:k1}
  \begin{split}
   \kappa_1(u,v)
   &=k_1+a_{30}u+a_{21}v
   +\frac{1}{2(k_1-k_2)}\{[2{a_{21}}^2+(a_{40}-3{k_1}^3)(k_1-k_2)]u^2\\ 
   &\quad+2[2a_{21}a_{12}+a_{31}(k_1-k_2)]uv
   +[2{a_{12}}^2+(a_{22}-k_1{k_2}^2)(k_1-k_2)]v^2\}+O(u,v)^3,
  \end{split}
 \end{align}
 and we have
 \begin{equation}
  \label{eq:3-jet_k1}
  \frac{\partial^3\kappa_1}{\partial u^3}(0,0)
   =\frac{6{a_{21}}^2(-a_{30}+a_{12})+6a_{21}a_{31}(k_1-k_2)+(a_{50}-18a_{30}{k_1}^2)(k_1-k_2)^2}
   {6(k_1-k_2)^2}.
 \end{equation}}
 \colB{It follows form \eqref{eq:PriVec} that there 
 is a real number $\mu\ne0$ 
 such that $(\xi_1,\zeta_1)=\mu(N-\kappa_1G,-M+\kappa_1F)$.
 Selection of 
 $(\xi_1,\zeta_1)$ in order for the tangent
 vector $\xi_1g_u+\zeta_1g_v$ to be of \corC{the} unit length shows that  
 ${\bf v}_1$ is expressed as
 \begin{align}
  \label{eq:V1}
  {\bf v}_1(u,v)=&
  \Biggl(1+O(u,v)^2\Biggr)\pd{}{u}
  +\Biggl(\frac{1}{k_1-k_2}(a_{21}u+a_{12}v)+O(u,v)^2\Biggr)\pd{}{v},
 \end{align}
 and that
 \begin{equation}
  \label{eq:2-jet_v1}
  \frac{\partial^2\zeta_1}{\partial
  u^2}(0,0)=\frac{2a_{21}(a_{12}-a_{30})+a_{31}(k_1-k_2)}{2(k_1-k_2)^2}.
 \end{equation}}
 \colB{Since ${\bf v}_1$ and ${\bf v}_2$ are orthogonal,
 it follows from \eqref{eq:V1} that ${\bf v}_2$ is expressed as
 \begin{align}
  \label{eq:V2}
  {\bf v}_2(u,v)=&
  \Biggl(\frac{1}{k_2-k_1}(a_{21}u+a_{12}v)+O(u,v)^{2}\Biggr)\corC{\pd{}{u}}
  +\Biggl(1+O(u,v)^{2}\Biggr)\corC{\pd{}{v}}.
 \end{align}}

 If two principal curvatures are equal at a point on the surface, we call
 such a point an umbilic.
 At an umbilic every direction through the umbilic is principal and
 the umbilic is \corR{an} isolated singularity of the direction
 \corR{field}.

 If only one principal curvature is zero, such a point is called a parabolic
 point.
 If both principal curvatures are zero, such a point is called a
 flat umbilic or a planer point.
 
 We can consider the focal surface. 
 \colRR{Except for the umbilics,} the focal surface
 consists \corR{of} two sheets, the blue and red sheets given by $g+{\bf
 n}/\kappa_1$ and $g+{\bf n}/\kappa_2$, respectively. 
 The two sheet\corR{s} come together at umbilics.
 We note that at parabolic points only one of the two sheet\corR{s} exits, and at
 flat umbilics the common focal point lies at infinity.
 
 The focal surface might have a singular point where the same colored
 principal curvature has an extreme value along the same colored line of
 curvature.
 Such a point on $g$ is called a \colR{{\it ridge}} point and on \colR{the} focal surface a
 \colR{{\it rib}}.
 Ridges were first studied in details by Porteous \cite{p1}.
 
 The locus of points where the principal curvature has extreme value along
 the other colored line of curvature is also of importance.
 This locus is called a sub-parabolic line.
 The sub-parabolic line \colR{was} studied in details by Bruce and
 Wilkinson \cite{bw1} in terms of folding maps.
 The sub-parabolic line is also the locus of points on the surface whose
 image is the parabolic line on the same colored sheet of the focal surface.
 In \corR{\cite{m2}} the sub-parabolic line appear as the locus of points where
 the other colored line of curvature has the geodesic inflections.
 
\subsection{Ridge points and sub-parabolic points}
\label{sec:Ridge}
Let $g(p)$ be \corC{not an umbilic} 
of a \corC{regular} surface $g$. 
We say that the point $g(p)$ is a \corR{{\it ridge point}} relative to
${\bf v}_i$ (`blue ridge point' for $i=1$, `red'
for $i=2$) if ${\bf v}_i\kappa_i(p)=0$, where ${\bf v}_i\kappa_i$
is the directional derivative of $\kappa_i$ in 
${\bf v}_i$.
Moreover, $g(p)$ is a \corR{$k$-{\it th order ridge point}}
\corC{relative to ${\bf v}_i$} if \colB{${\bf
v}_i^{(m)}\kappa_i(p)=0$ $(1\leq m\leq k)$ and ${\bf v}_i^{(k+1)}\kappa_i(p)\neq 0$,}
where ${\bf v}_i^{(k)}\kappa_i$ is the $k$-times directional
derivative of $\kappa_i$ in ${\bf v}_i$.
The set of ridge points is called a \corR{{\it ridge line}} or \corC{{\it ridges}}.

\colB{We turn to sub-parabolic points.
A point $g(p)$ which is not an umbilic is a {\it
sub-parabolic point} relative to ${\bf v}_i$ (`blue sub-parabolic
point' for $i=1$, `red' for $i=2$) if ${\bf v}_i\kappa_j(p)=0$~$(i\neq
j)$. The set of sub-parabolic points is called a {\it
sub-parabolic line}.}

\colB{Let $g$ be given in Monge form as in \eqref{eq:monge}, and let
$k_1\ne k_2$.
From \eqref{eq:k1} through \eqref{eq:V2}, we obtain the following lemmas.}

\begin{lemma}
\label{lem:ridge}
 \begin{enumerate}
  \vspace{-8pt}
  \setlength{\parskip}{0cm} 
  \setlength{\itemsep}{0cm} 
   \item The origin is a first order blue ridge point if and only if
         \[
         a_{30}=0\quad\corC{\mathrm{and}}\quad
         3{a_{21}}^2+(a_{40}-3{k_1}^3)(k_1-k_2)\neq 0. 
         \]
         
  \item The origin is a second order blue ridge point if and only if  
         \begin{align*}
          &a_{30}=3{a_{21}}^2+(a_{40}-3{k_1}^3)(k_1-k_2)=0\quad \mathrm{and}\\
          &15{a_{21}}^2a_{12}+10a_{21}a_{31}(k_1-k_2)+a_{50}(k_1-k_2)^2\neq
          0.
         \end{align*}
 \end{enumerate}
\end{lemma}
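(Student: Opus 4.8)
The plan is to reduce everything to the explicit power-series expansion of $\kappa_1$ given in \eqref{eq:k1}, together with the expression \eqref{eq:V1} for the blue principal vector field, and then compute the successive directional derivatives ${\bf v}_1\kappa_1$, ${\bf v}_1^{(2)}\kappa_1$, ${\bf v}_1^{(3)}\kappa_1$ at the origin. Since ${\bf v}_1 = (1 + O(u,v)^2)\partial_u + (\tfrac{1}{k_1-k_2}(a_{21}u + a_{12}v) + O(u,v)^2)\partial_v$, at the origin ${\bf v}_1$ agrees with $\partial_u$ to first order, so ${\bf v}_1\kappa_1(0,0) = \partial_u\kappa_1(0,0) = a_{30}$ by \eqref{eq:k1}. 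This gives the first condition $a_{30}=0$ in both parts.

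For part (1), I would compute ${\bf v}_1^{(2)}\kappa_1(0,0)$. Writing ${\bf v}_1 = \xi_1\partial_u + \zeta_1\partial_v$, one has ${\bf v}_1^{(2)}\kappa_1 = \xi_1\partial_u({\bf v}_1\kappa_1) + \zeta_1\partial_v({\bf v}_1\kappa_1)$; evaluating at the origin, the only contributions come from $\xi_1(0,0)=1$, $\zeta_1(0,0)=0$, and the second-order part of $\kappa_1$, together with the term where $\partial_u\xi_1$ or $\partial_u\zeta_1$ hits the first-order part $a_{30}u+a_{21}v$ of $\kappa_1$. Because $a_{30}=0$ on the locus in question and $\partial_u\zeta_1(0,0) = a_{21}/(k_1-k_2)$ from \eqref{eq:V1}, a short computation should yield
\[
{\bf v}_1^{(2)}\kappa_1(0,0)
= 2{a_{21}}^2 + (a_{40}-3{k_1}^3)(k_1-k_2) + \frac{{a_{21}}^2}{k_1-k_2}\cdot(k_1-k_2)
= 3{a_{21}}^2 + (a_{40}-3{k_1}^3)(k_1-k_2),
\]
up to an overall nonzero constant; the origin is a first order blue ridge point exactly when this vanishes to first order only, i.e. when $a_{30}=0$ and ${\bf v}_1^{(2)}\kappa_1(0,0)\neq 0$, which is the stated criterion.

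For part (2) I would push the same computation one order further: assuming $a_{30}=0$ and $3{a_{21}}^2+(a_{40}-3{k_1}^3)(k_1-k_2)=0$, compute ${\bf v}_1^{(3)}\kappa_1(0,0)$. This is where the bookkeeping is heaviest: one must expand ${\bf v}_1^{(3)}\kappa_1 = {\bf v}_1({\bf v}_1^{(2)}\kappa_1)$, collecting every way the three applications of ${\bf v}_1$ can distribute between the coefficient functions $\xi_1,\zeta_1$ (needing their 1- and 2-jets, hence \eqref{eq:2-jet_v1}) and the 3-jet of $\kappa_1$ (needing \eqref{eq:3-jet_k1}), and then repeatedly using the two constraints to eliminate $a_{30}$ and $a_{40}$. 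I expect the dust to settle to a nonzero multiple of $15{a_{21}}^2a_{12} + 10a_{21}a_{31}(k_1-k_2) + a_{50}(k_1-k_2)^2$, and the origin is a second order blue ridge point precisely when the first two conditions hold and this last expression is nonzero. The main obstacle is purely organizational: keeping track of all the chain-rule terms in ${\bf v}_1^{(3)}\kappa_1$ and correctly applying the defining relations to simplify; there is no conceptual difficulty, only the risk of an arithmetic slip, which can be guarded against by carrying out the expansion with a computer algebra system.
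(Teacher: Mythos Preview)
Your proposal is correct and follows exactly the route the paper indicates: the paper does not write out a proof of this lemma but simply says it follows from \eqref{eq:k1}--\eqref{eq:V2}, i.e.\ by substituting the Taylor expansions of $\kappa_1$ and ${\bf v}_1$ into the directional derivatives ${\bf v}_1^{(m)}\kappa_1(0,0)$, which is precisely what you do. Your displayed computation of ${\bf v}_1^{(2)}\kappa_1(0,0)$ is off by a harmless factor of $1/(k_1-k_2)$ (you have essentially computed $(k_1-k_2){\bf v}_1^{(2)}\kappa_1(0,0)$), but you already flag ``up to an overall nonzero constant,'' so nothing is wrong.
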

\begin{lemma}
 \label{lem:subpara}
 \colB{The origin is a red sub-parabolic point if and only if $a_{21}=0$.}
\end{lemma}

\colB{From \eqref{eq:k1}, \eqref{eq:V1}, and \eqref{eq:V2}, it follows
that the equation of the blue ridge line through the origin is expressed as 
\begin{equation}
 \label{eq:rline}
 [3{a_{21}}^2+(a_{40}-3{k_1}^{3})(k_1-k_2)]u+[3a_{21}a_{12}+a_{31}(k_1-k_2)]v+\cdots =0.
\end{equation}
and that the red sub-parabolic line through the origin is expressed as
\begin{equation}
 \label{eq:spline}
  a_{31}(k_1-k_2)u+[a_{12}(2a_{12}-a_{30})+(a_{22}-k_1{k_2}^{2})(k_1-k_2)]v+\cdots
  =0.
\end{equation}
 Equation \eqref{eq:rline} implies the following lemma.}
\begin{lemma}
 \label{lem:ridgeline}
 Suppose that 
 the origin is a blue ridge point.
 Then the blue ridge line 
 \colB{has a singular point at the origin} if and only if 
 \begin{align*}
  3{a_{21}}^2+(a_{40}-3{k_1}^3)(k_1-k_2)=
  3a_{21}a_{12}+a_{31}(k_1-k_2)=0.
 \end{align*}
 \end{lemma}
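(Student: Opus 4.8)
The plan is to read the statement off directly from the local equation \eqref{eq:rline} of the blue ridge line. First I would recall that, with the normalization $k_1\ne k_2$, ${\bf v}_1=(1,0)$, ${\bf v}_2=(0,1)$ used throughout this subsection, the blue ridge line is by definition the zero set of the function $\rho:={\bf v}_1\kappa_1$, the directional derivative of $\kappa_1$ along the vector field ${\bf v}_1$ of \eqref{eq:V1}. The hypothesis that the origin is a blue ridge point is precisely the statement $\rho(0,0)=0$ (equivalently, by the constant terms of \eqref{eq:k1} and \eqref{eq:V1}, that $a_{30}=0$), so near the origin the blue ridge line is the analytic curve $\{\rho=0\}$ passing through the origin.

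Next I would invoke the elementary fact that a point lying on an analytic plane curve $\{h=0\}$ is a singular point of that curve exactly when the $1$-jet of $h$ vanishes there, i.e. $h_u=h_v=0$ at the point. Applied to $h=\rho$, the origin is a singular point of the blue ridge line if and only if $\rho_u(0,0)=\rho_v(0,0)=0$.

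Finally I would substitute \eqref{eq:rline}, which displays $\rho$ (up to the nonzero factor $1/(k_1-k_2)$) as
\[
[3{a_{21}}^2+(a_{40}-3{k_1}^3)(k_1-k_2)]\,u+[3a_{21}a_{12}+a_{31}(k_1-k_2)]\,v+O(u,v)^2,
\]
so that $\rho_u(0,0)$ and $\rho_v(0,0)$ are exactly the two bracketed coefficients. Hence the origin is a singular point of the blue ridge line if and only if
\[
3{a_{21}}^2+(a_{40}-3{k_1}^3)(k_1-k_2)=3a_{21}a_{12}+a_{31}(k_1-k_2)=0,
\]
which is the assertion.

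Since \eqref{eq:rline} is already granted in the excerpt, there is no real obstacle; the only point worth a sentence is the derivation of \eqref{eq:rline} itself, namely that combining the $1$-jets from \eqref{eq:k1} and \eqref{eq:V1} gives $\rho=\partial\kappa_1/\partial u+\zeta_1\,\partial\kappa_1/\partial v$ modulo higher order, whose linear part picks up from the cross term $\zeta_1\,\partial\kappa_1/\partial v$ the extra contribution $(a_{21}^2u+a_{21}a_{12}v)/(k_1-k_2)$; this is what turns the coefficient $2{a_{21}}^2$ appearing in \eqref{eq:k1} into the $3{a_{21}}^2$ of \eqref{eq:rline}, and similarly upgrades the $v$-coefficient. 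One should also note, for the curve language to be meaningful, that $\rho$ is not identically zero to infinite order at the origin, which is automatic under the standing assumptions.
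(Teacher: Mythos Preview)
Your argument is correct and is exactly the paper's approach: the paper simply states that equation~\eqref{eq:rline} implies the lemma, and you have spelled out the obvious reason, namely that the linear part of the defining function of the ridge line vanishes precisely when both bracketed coefficients do. Your extra paragraph deriving the $3a_{21}^2$ versus $2a_{21}^2$ discrepancy is a helpful elaboration of what the paper leaves implicit in the passage from \eqref{eq:k1}, \eqref{eq:V1} to \eqref{eq:rline}.
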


\subsection{Umbilics}
\label{sec:umbilics}
{\it Umbilics} of a regular surface are points where the two principal
curvatures coincide.  
The classification of generic umbilics is due to Darboux \cite{d}.
He gave three configurations of the lines of curvature. 
The three configurations were given the names lemon, star, and
monstar by Berry and Hannay \cite{BH}.
Their classification was provided by Gutierrez and Sotomayor \cite{gs}.

Suppose that the origin is an umbilic of a surface $g$, and that $g$ is
given in Monge form
\begin{align}
 \label{eq:umbmonge}
 g(u,v)=(u,v,f(u,v)),\quad
 f(u,v)=\dfrac{k}{2}(u^{2}+v^{2})+\sum\limits_{i+j\geq
 3}\frac{1}{i!j!}a_{ij}u^{i}v^{j},
\end{align}
where $k$ is the common value for the principal curvatures at the
origin.

At 
\colB{the} umbilic the cubic part $f_3$ of $f$ in (\ref{eq:umbmonge})
determines its type. 
\colB{The} umbilic of 
$g$ is said to be {\it elliptic} or {\it
hyperbolic} if $f_3$ has three real roots or one real root,
respectively.
Moreover, \colB{the} umbilic is said to be {\it right-angled} if the root
directions of the quadratic form which is the determinant of the Hessian
matrix of $f_3$ are mutually orthogonal with respect to the standard
scalar product on $\BB{R}^2$.
Such an umbilic necessarily is \corC{a} hyperbolic umbilic.

\corC{We shall present the conditions for types of umbilics in terms of
the coefficients of the Monge form.}
We 
\corC{set}
\begin{align*}
 \Gamma :=
 \colR{\begin{array}{|cccc|}
  a_{30} & 2a_{21} & a_{12} & 0 \\
  0 & a_{30} & 2a_{21} & a_{12} \\
  a_{21} & 2a_{12} & a_{03} & 0 \\
  0 & a_{21} & 2a_{12} & a_{03} \\
 \end{array}
 \ ,}\quad\corC{\mathrm{and}}\quad
 \Gamma^{\prime} :=
 \colR{\begin{array}{|ccc|}
  1 & 0 & 1 \\
  a_{30} & a_{21} & a_{12} \\
  a_{21} & a_{12} & a_{03} \\
 \end{array}
 \ .}
\end{align*}
\corC{The discriminant of $f_3$ is given by $-\Gamma$. Hence,} the
origin is an elliptic umbilic 
or hyperbolic umbilic 
\corC{if and only if $\Gamma<0$ 
or $\Gamma>0$, respectively.}
Moreover, \corC{the determinant of the Hessian matrix of $f_3$ is given by
\[
 -36[({a_{21}}^2-a_{30}a_{12})u^2+(a_{21}a_{12}-a_{30}a_{03})uv+({a_{12}}^2-a_{21}a_{03})v^2].
\]}
\corC{It follows that} the origin is a right-angled umbilic 
\corC{if and only if} $\Gamma^\prime=0$.

It is shown in 
\cite
{p2} that there is one ridge line passing
through a hyperbolic umbilic and three ridge lines passing through an
elliptic umbilic. \corC{It is also shown in \cite
{p2} that ridge lines change 
their color as they pass through a generic
umbilic. 
}

\corC{It is known that when there is one direction for lines of curvature
at an umbilic, there is one sub-parabolic line through the umbilic in the
same direction, while, when there are three directions for
lines of curvature at an umbilic, there are three sub-parabolic lines
through the umbilic in the same three directions \cite{bw1,m2}.}

\subsection{Constant principal curvature lines}
\corC{We set
\[
 \Sigma_c:=\{(u,v)\in U\ ;\ \kappa_i(u,v)=c\ \text{for some}\ i\}.
\]
We call $\Sigma_c$ the {\it constant principal curvature \colR{$($CPC$)$}
line with the value of $c$}. 
There 
are two CPC lines $\Sigma_{\kappa_1(p)}$ (colored by
blue) and $\Sigma_{\kappa_2(p)}$ (colored by red) locally through a
non-umbilical point $g(p)$.
We recall that a point $p\in U$ is a singular point of the
parallel surface $g^t$ at distance $t$ if and only if
$t=1/\kappa_i(p)$ for some $i$. This means that the set of
singular points of $g^t$ is the CPC line $\Sigma_{\kappa_i(p)}$.}

Firstly, we 
\corC{investigate} 
the CPC line\corC{s} away form umbilics.
Suppose that a surface $g$ is given 
in Monge form as in \eqref{eq:monge}.
From \eqref{eq:k1}, 
$\kappa_1(u,v)=k_1$ is 
\corC{expressed by the equation}
\begin{align}
 \label{eq:cK1}
 \begin{split}
  0&=a_{30}u+a_{21}v +\frac{1}{2(k_1-k_2)}\{
  [2{a_{21}}^2+(a_{40}-3{k_1}^3)(k_1-k_2)]u^2\\
  &\quad+2[2a_{21}a_{12}+a_{31}(k_1-k_2)]uv
  +[2{a_{12}}^2+(a_{22}-k_1{k_2}^2)(k_1-k_2)]v^2\}+\cdots.
 \end{split}
\end{align}
\colB{This equation}
shows that the CPC line
\corC{$\Sigma_{k_1}$} is singular at the origin if and only if
$a_{30}=a_{21}=0$, that is, the origin is a blue ridge point and 
red sub-parabolic point (Lemma \ref{lem:ridge} and \ref{lem:subpara}). 

\begin{lemma}
\label{lem:CPCline}
 Suppose that the origin is a blue ridge point which is not a red
 sub-parabolic point.
 \colB{The CPC line $\Sigma_{k_1}$ is transverse $($resp. tangential$)$ to the
 blue ridge line at the origin if and only if the order of the ridge is
 one $($resp. more than one$)$.}
\end{lemma}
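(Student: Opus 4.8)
The plan is to reduce everything to two local equations already recorded in the text: equation~\eqref{eq:cK1} for the CPC line $\Sigma_{k_1}$ and equation~\eqref{eq:rline} for the blue ridge line. First I would put the hypotheses in coordinate form. Since ${\bf v}_1(0)=\partial/\partial u$ and $\partial\kappa_1/\partial u(0,0)=a_{30}$ by \eqref{eq:k1}, the origin being a blue ridge point means $a_{30}=0$; and the origin not being a red sub-parabolic point means $a_{21}\neq 0$ by Lemma~\ref{lem:subpara}.

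Next I would analyse the two curves. Substituting $a_{30}=0$ into \eqref{eq:cK1}, the linear part of the defining equation of $\Sigma_{k_1}$ is $a_{21}v$, which is nonzero by the second hypothesis; hence $\Sigma_{k_1}$ is a regular curve at the origin with tangent line $\{v=0\}$, i.e. the line through the blue principal direction ${\bf v}_1(0)=\partial/\partial u$. (This is the only place the sub-parabolic hypothesis enters, and it is what makes the comparison of tangents meaningful.) For the ridge line I would write $A:=3{a_{21}}^2+(a_{40}-3{k_1}^3)(k_1-k_2)$ and $B:=3a_{21}a_{12}+a_{31}(k_1-k_2)$ for the coefficients in the linear part $Au+Bv$ of \eqref{eq:rline}. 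Since $a_{30}=0$ already holds, Lemma~\ref{lem:ridge}~(1) identifies ``the ridge has order one'' with $A\neq 0$, hence ``the ridge has order more than one'' with $A=0$.

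It then remains to match the order dichotomy with transversality, using that tangential means simply ``not transverse''. If $A\neq 0$, the ridge line is regular at the origin with tangent line $\{Au+Bv=0\}$, which does not contain $\partial/\partial u$ because its $u$-coefficient $A$ is nonzero; so this tangent is distinct from that of $\Sigma_{k_1}$ and the two curves are transverse. If $A=0$, the linear part of \eqref{eq:rline} reduces to $Bv$: when $B\neq 0$ the ridge line is regular with tangent $\{v=0\}$, coinciding with the tangent of $\Sigma_{k_1}$, and when $B=0$ the ridge line is singular at the origin by Lemma~\ref{lem:ridgeline}; in either subcase it fails to be transverse to $\Sigma_{k_1}$. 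Combining the two cases gives both biconditionals. The computations are light, so I do not foresee a real obstacle; the only point requiring care is the degenerate subcase $A=B=0$ (order $\geq 2$ with a singular ridge line), which is precisely why I would fix at the outset the convention that ``tangential'' means ``not transverse''.
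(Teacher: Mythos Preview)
Your proof is correct and follows essentially the same approach as the paper: both use equations~\eqref{eq:rline} and~\eqref{eq:cK1} to reduce transversality to the condition $3{a_{21}}^2+(a_{40}-3{k_1}^3)(k_1-k_2)\neq 0$, and then invoke Lemma~\ref{lem:ridge}. Your version is simply more explicit, spelling out the tangent directions and carefully disposing of the degenerate subcase $A=B=0$ (singular ridge line) that the paper's terse argument leaves implicit.
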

\begin{proof}
 It follows from \eqref{eq:rline} and \eqref{eq:cK1} that the CPC line
 $\Sigma_{k_1}$ is transverse \colB{(resp. tangentail)} to the blue
 ridge line at the origin if and only if
 \[
 3{a_{21}}^2+(a_{40}-3{k_1}^3)(k_1-k_2)\ne0\quad
 \colB{(\text{resp.}\ 3{a_{21}}^2+(a_{40}-3{k_1}^3)(k_1-k_2)=0).}
 \]
 Hence, the \colR{assertion} of the lemma follows from Lemma
 \ref{lem:ridge}. 
\end{proof}

\begin{lemma}
 Suppose that the origin is a blue ridge point and red sub-parabolic point.
 Then the CPC line $\Sigma_{k_1}$ is locally either an isolated point or
 \colR{the union of} two intersecting smooth curves at the origin, if
 the blue ridge line crosses the red sub-parabolic line at the origin.  
\end{lemma}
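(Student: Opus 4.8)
The plan is to work directly with the power-series expansion \eqref{eq:cK1} of the CPC line $\Sigma_{k_1}$ under the hypotheses of the lemma. The hypotheses say the origin is a blue ridge point (so $a_{30}=0$ by Lemma~\ref{lem:ridge}) and a red sub-parabolic point (so $a_{21}=0$ by Lemma~\ref{lem:subpara}), and moreover the blue ridge line crosses the red sub-parabolic line transversally at the origin; by \eqref{eq:rline} and \eqref{eq:spline} this transversality is the nonvanishing of a certain $2\times 2$ determinant in the coefficients, namely
\[
\begin{vmatrix}
(a_{40}-3k_1^3)(k_1-k_2) & a_{31}(k_1-k_2)\\[2pt]
a_{31}(k_1-k_2) & a_{12}(2a_{12}-a_{30})+(a_{22}-k_1k_2^2)(k_1-k_2)
\end{vmatrix}\ne 0,
\]
where $a_{30}=a_{21}=0$ should already be substituted.

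First I would set $a_{30}=a_{21}=0$ in \eqref{eq:cK1}. Then the linear terms vanish and the equation becomes $\Sigma_{k_1}=\{Q(u,v)+(\text{higher order})=0\}$, where $Q$ is the quadratic form
\[
Q(u,v)=\tfrac{1}{2(k_1-k_2)}\bigl\{(a_{40}-3k_1^3)(k_1-k_2)\,u^2+2a_{31}(k_1-k_2)\,uv+(2a_{12}^2+(a_{22}-k_1k_2^2)(k_1-k_2))\,v^2\bigr\}.
\]
The key observation is that the discriminant of $Q$ is, up to a nonzero factor $1/(k_1-k_2)^2$ (and up to sign), exactly the determinant appearing in the transversality condition above — after substituting $a_{30}=0$ the entry $a_{12}(2a_{12}-a_{30})$ becomes $2a_{12}^2$, matching the $v^2$-coefficient of $2(k_1-k_2)Q$. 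Hence the crossing hypothesis is precisely the statement that $Q$ is a nondegenerate quadratic form. I would then invoke the Morse lemma (or, more elementarily, the fact that a plane curve singularity whose lowest-order part is a nondegenerate quadratic form is either an isolated point, when $Q$ is definite, or a node — two smooth branches meeting transversally — when $Q$ is indefinite): in the definite case $\Sigma_{k_1}$ is locally the single point $\{0\}$, and in the indefinite case it is locally the union of two smooth curves intersecting at the origin. This is exactly the dichotomy asserted.

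The main obstacle is bookkeeping rather than conceptual: one must verify carefully that after imposing $a_{30}=a_{21}=0$ the quadratic part of \eqref{eq:cK1} is nondegenerate if and only if the blue ridge line and red sub-parabolic line cross transversally, i.e. that the $2\times 2$ determinant formed from the leading coefficients of \eqref{eq:rline} and \eqref{eq:spline} agrees (up to a nonzero scalar) with $\operatorname{disc} Q$. One should also note that "crosses" here must be read as "meets transversally"; if instead the two lines were tangent, $Q$ would be degenerate and the conclusion would fail, so the transversality is genuinely used. Once that identification is made, the application of the Morse lemma to the real-analytic function cutting out $\Sigma_{k_1}$ is routine, and the two cases (definite/indefinite) give the two alternatives in the statement.
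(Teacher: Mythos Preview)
Your proposal is correct and follows essentially the same route as the paper: set $a_{30}=a_{21}=0$, identify the transversality of the blue ridge line and red sub-parabolic line (via \eqref{eq:rline} and \eqref{eq:spline}) with the nonvanishing of the determinant $A$ of the Hessian of $\kappa_1$ at the origin, and then apply the Morse lemma to $\kappa_1-k_1$. The only cosmetic difference is that the paper phrases the nondegeneracy as $\det(\mathrm{Hess}\,\kappa_1(0,0))\ne0$ rather than as the discriminant of your quadratic form $Q$, but since $\Sigma_{k_1}=\{\kappa_1=k_1\}$ these are the same object.
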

\begin{proof}
 First we remark that
 \begin{align*}
  \frac{\partial \kappa_1}{\partial
  u}(0,0)=a_{30}=0\quad\corC{\mathrm{and}}\quad \frac{\partial
  \kappa_1}{\partial v}(0,0)=a_{21}=0. 
 \end{align*}
 The equations of the blue ridge line \eqref{eq:rline} and the red
 sub-parabolic line \eqref{eq:spline} \colR{are reduce to}
 \[
 (a_{40}-3{k_1}^3)(k_1-k_2)u+a_{31}(k_1-k_2)v+\cdots =0
 \]
 and
 \[
 a_{31}(k_1-k_2)u+[2{a_{12}}^2+(a_{22}-k_1{k_2}^2)(k_1-k_2)]v+\cdots
 =0,
 \]
 respectively.
 From these equations, the blue ridge line crosses the red sub-parabolic
 line at the origin if and only if \colB{$A\ne0$, where}
\colB{\[
  A=(a_{40}-3{k_1}^3)(k_1-k_2)[2{a_{12}}^2+(a_{22}-k_1{k_2}^2)(k_1-k_2)]-{a_{31}}^2(k_1-k_2)^2. 
 \]}
 In addition, from \eqref{eq:k1}, the determinant of the Hessian matrix
 of $\kappa_1$ at $(0,0)$ is given by \colB{$A$.}
 By the Morse lemma (\corC{see, for example,} \cite{b3}), we complete the proof.
\end{proof}

Secondly, we investigate the CPC line near an umbilic.
\begin{theorem}
 \label{thm:cpc_at_umbilic}
 \begin{enumerate}
   \vspace{-8pt}
   \setlength{\parskip}{0cm} 
   \setlength{\itemsep}{0cm} 
  \item The CPC line $\Sigma_k$ is locally an isolated point at
        \colB{an} elliptic umbilic, where $k$ is the common value for
        the principal curvatures at the umbilic. 
  \item The CPC line $\Sigma_k$ is locally two intersecting smooth
        curves at a hyperbolic umbilic. The locally two curves change
        their color as they pass through the hyperbolic umbilic. 
 \end{enumerate}
\end{theorem}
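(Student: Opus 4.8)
The plan is to exhibit $\Sigma_k$ near the umbilic as the zero set of a \emph{single} smooth function and to read its local shape off the $2$-jet of that function. Although the principal curvatures $\kappa_1,\kappa_2$ themselves are not differentiable at an umbilic, they are the two roots in $\kappa$ of the characteristic equation of \eqref{eq:PriVec}, namely $(L-\kappa E)(N-\kappa G)-(M-\kappa F)^2=0$; hence
\[
\Psi:=(\kappa_1-k)(\kappa_2-k)=\frac{(L-kE)(N-kG)-(M-kF)^2}{EG-F^2}
\]
is a smooth function on $U$, and $\Sigma_k=\Psi^{-1}(0)$ near the origin. Substituting the umbilic Monge form \eqref{eq:umbmonge} into $E,F,G,L,M,N$, the three quantities $L-kE$, $M-kF$, $N-kG$ all vanish at the origin, with linear parts $a_{30}u+a_{21}v$, $a_{21}u+a_{12}v$, $a_{12}u+a_{03}v$. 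Hence $\Psi$ vanishes to order $2$ at the origin, and its $2$-jet is $-q$, where $q$ is the quadratic form
\[
q(u,v):=(a_{21}^2-a_{30}a_{12})u^2+(a_{21}a_{12}-a_{30}a_{03})uv+(a_{12}^2-a_{21}a_{03})v^2
\]
occurring above as $-\tfrac1{36}$ of the Hessian determinant of $f_3$. A direct computation gives $\operatorname{disc}q=\Gamma$, so by the criterion recalled above $q$ is definite at an elliptic umbilic ($\Gamma<0$) and non-degenerate indefinite at a hyperbolic umbilic ($\Gamma>0$).

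This already gives (1): at an elliptic umbilic the $2$-jet $-q$ of $\Psi$ is definite and non-degenerate, so the origin is a non-degenerate critical point at which $\Psi$ attains a strict local extremum; by the Morse lemma (see, e.g., \cite{b3}) $\Psi^{-1}(0)=\{0\}$ locally, i.e. $\Sigma_k$ is locally an isolated point. For (2): at a hyperbolic umbilic $-q$ is non-degenerate and indefinite, so the Morse lemma yields local coordinates in which $\Psi$ is $xy$; thus $\Sigma_k=\Psi^{-1}(0)$ is locally the union of two smooth curves $B_1,B_2$ meeting transversally at the origin, whose tangent directions $\theta_1,\theta_2$ are the two distinct real roots of $q$.

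It remains to prove that each $B_i$ changes colour. The right bookkeeping is the \emph{other} symmetric function of the shifted curvatures,
\[
\tau:=\operatorname{tr}S-2k=(\kappa_1-k)+(\kappa_2-k)=\frac{EN-2FM+GL}{EG-F^2}-2k,
\]
again smooth, with $\tau(0)=0$ and linear part at the origin $\ell(u,v):=(a_{30}+a_{12})u+(a_{21}+a_{03})v=\Delta f_3$. Ordering the principal curvatures near the origin so that $\kappa_1\ge\kappa_2$: a point of $\Sigma_k\setminus\{0\}$ lies on $\Sigma_{\kappa_1}$ (blue) exactly when $\kappa_1=k$, and then $\tau=\kappa_2-\kappa_1\le 0$; it lies on $\Sigma_{\kappa_2}$ (red) exactly when $\kappa_2=k$, and then $\tau=\kappa_1-\kappa_2\ge 0$. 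So it suffices that $\tau$ change sign at the origin along each $B_i$; since the derivative of $\tau$ along $B_i$ at the origin is a nonzero multiple of $\ell(\theta_i)$, it is enough to check $\ell(\theta_i)\ne 0$ for $i=1,2$.

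Suppose $\ell(\theta_i)=0$ for some $i$. Since $\theta_i$ is a root of $q$, hence of $\det\operatorname{Hess}f_3$, the symmetric $2\times2$ matrix $\operatorname{Hess}f_3$ evaluated at $\theta_i$ has zero determinant; its trace there equals $(f_3)_{uu}(\theta_i)+(f_3)_{vv}(\theta_i)=\Delta f_3(\theta_i)=\ell(\theta_i)=0$. A symmetric $2\times2$ matrix with zero trace and zero determinant is the zero matrix, so $(f_3)_{uu}$, $(f_3)_{uv}$, $(f_3)_{vv}$ all vanish at $\theta_i$; in coordinates with $\theta_i$ along an axis this forces $f_3$ to be a constant times the cube of a linear form, so $\theta_i$ is a triple root and $\operatorname{disc}f_3=0$, contradicting $\Gamma\ne0$ (recall $\operatorname{disc}f_3=-\Gamma$). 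Hence $\ell(\theta_1),\ell(\theta_2)\ne0$, so each $B_i$ lies on $\Sigma_{\kappa_1}$ on one side of the umbilic and on $\Sigma_{\kappa_2}$ on the other, completing (2). I expect the $2$-jet expansions and the two uses of the Morse lemma to be routine; the one delicate point is the colour statement --- isolating the auxiliary function $\tau$ and verifying $\ell(\theta_i)\ne0$ --- which is exactly where the hyperbolicity $\Gamma>0$ (equivalently, $f_3$ having no repeated root) is used.
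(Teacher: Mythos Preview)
Your argument is correct. For part~(1) and the first half of~(2) it is essentially the paper's proof: both substitute $\kappa=k$ into the characteristic equation and identify the quadratic part and its discriminant with $\Gamma$; you make the appeal to the Morse lemma explicit where the paper leaves it implicit.

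For the colour-change statement in~(2) you take a genuinely different route. The paper passes to the special hyperbolic-umbilic normal form \eqref{eq:HyperUmbiForm} and writes down the leading terms of $\kappa_1,\kappa_2$ explicitly (with the square root), then reads off the sign flip directly. You instead introduce the smooth invariant $\tau=\operatorname{tr}S-2k$, observe that on $\Sigma_k\setminus\{0\}$ its sign distinguishes blue from red, and reduce the colour change along each branch $B_i$ to the linear-algebra fact that a symmetric $2\times2$ matrix with vanishing trace and determinant is zero---forcing $f_3$ to be a perfect cube and hence $\Gamma=0$, a contradiction. This is cleaner and more coordinate-free than the paper's computation: it avoids the special normal form and the explicit square-root expansion, and it pinpoints exactly where the hypothesis $\Gamma\ne0$ is used. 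The paper's approach, on the other hand, has the virtue of giving the explicit first-order asymptotics of $\kappa_i$ near the umbilic, which may be useful elsewhere.
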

\begin{proof}
 We suppose that the origin is an umbilic of \corC{a surface} $g$, and
 that the surface $g$ is given in Monge form as in \eqref{eq:umbmonge}. 
 The principal curvatures are the roots of the quadric equation
 \[
 (EG-F^2)\corC{\kappa}^2-(EN-2FM+GL)\corC{\kappa}+(LN-M^2)=0.
 \]
 Replacing $\kappa$ by $k$ which is the common value for the
 principal curvatures at the origin, we can express the equation in the
 form 
 \begin{align}
  \label{eq:CPCLineAtUmbilic}
  (a_{30}a_{12}-{a_{21}}^2)u^2+(a_{30}a_{03}-a_{21}a_{12})uv
  +(a_{21}a_{03}-{a_{12}}^2)v^2+\cdots=0.
 \end{align}
 The locus of this equation is 
 $\Sigma_k$.
 We denote the quadric part of \eqref{eq:CPCLineAtUmbilic} by $\alpha
 u^2+2\beta uv+\gamma v^2$. 
 Then we have $\beta^2-\alpha \gamma=\Gamma/4$,
 where $\Gamma$ is as in 
 \colB{Subsection}~\ref{sec:umbilics}.
 Hence, 
 $\Sigma_k$ at an umbilic is locally either an
 isolated point if 
 the origin is an elliptic umbilic 
 or two smooth intersecting curves if 
 the origin is a hyperbolic umbilic 
 . 
 
 We investigate the case of hyperbolic umbilics in detail.
 For a hyperbolic umbilic, we may assume that $g$ is locally given in
 the form 
 \begin{align}
  \label{eq:HyperUmbiForm}
  g(u,v)=(u,v,f(u,v)),\quad f(u,v)
  =\frac k2(u^2+v^2)+\frac P6u(u^2+2Quv+Rv^2)+\cdots
 \end{align}
 for some $P$, $Q$, and $R$ with $P\ne0$ and $Q^2-R<0$.
 Then 
 \colB{$\kappa_1$ and $\kappa_2$ $(\kappa_1\geq\kappa_2)$}
 are expressed as
 \colB{
 {\small 
 \[
 \kappa_i(u,v)
 =k+\dfrac16\Bigl(P[(R+3)u+2Qu]
 +\varepsilon|P|\sqrt{[16Q^2+(R-3)^2]u^2+12Q(R+1)uv+4(Q^2+R^2)v^2}\Bigr)+\cdots,
 \]}}
 \colRR{where $\varepsilon=1$ for $i=1$ and $-1$ for $i=2$.}
 Therefore, the locally two smooth curves change their color as they
 \colR{go} through the hyperbolic umbilic. 
\end{proof}

\begin{remark}
 \begin{enumerate}
    \vspace{-8pt}
    \setlength{\parskip}{0cm} 
    \setlength{\itemsep}{0cm} 
  \item A simple calculation 
        gives $\Gamma^\prime=\alpha+\gamma$, where $\Gamma^\prime$
        is as in 
        \colB{Subsection}~\ref{sec:umbilics}.
        It follows that the tangents to the locally two smooth curves of
        \colB{$\Sigma_k$} through the right-angled umbilic are  mutually
        orthogonal.
  \item Equation \eqref{eq:CPCLineAtUmbilic} shows that 
        \colB{$\Sigma_k$} is approximated by a conic near the origin
        when the origin is 
        \colB{an elliptic or hyperbolic} umbilic.
 \end{enumerate}
\end{remark}

\corC{Finally,} We 
\corC{investigate} bifurcations of the CPC lines 
at \corC{an} umbilic. 
We start with the case of an elliptic umbilic.
There are three ridge lines through the elliptic umbilic.
The bifurcation of the CPC lines at the elliptic
umbilic is shown in Figure~\ref{fig:CPCLineAtUmbilic}~(i), \corC{(ii)
(cf.~\colR{\cite[Figure~2]{b1}})}.  
We now turn to the case of a hyperbolic umbilic.
We 
\corC{may assume that the surface given in} the from
\eqref{eq:HyperUmbiForm}. There is one ridge line through the hyperbolic 
umbilic. 
\corC{Calculations show that the ridge line is tangent to $2Qu+Rv=0$ at
the origin \colR{(cf.~\cite[part (iii) of the corollary of
Theorem~11.10]{p2}),} and that the
locally two smooth curves of 
$\Sigma_k$ are tangent to
$[QR\pm\sqrt{R^2(-Q^2+R)}]u+R^2v=0$.}
Thus it follows 
that the bifurcation of the CPC lines at the hyperbolic umbilic is given
\corC{in} Figure~\ref{fig:CPCLineAtUmbilic}~
\corC{(iii) \colR{through} (v) (cf.~\colR{\cite[Figure~2]{b1}}), in the generic context}.

\begin{figure}[htbp]
 \begin{center}
  \input{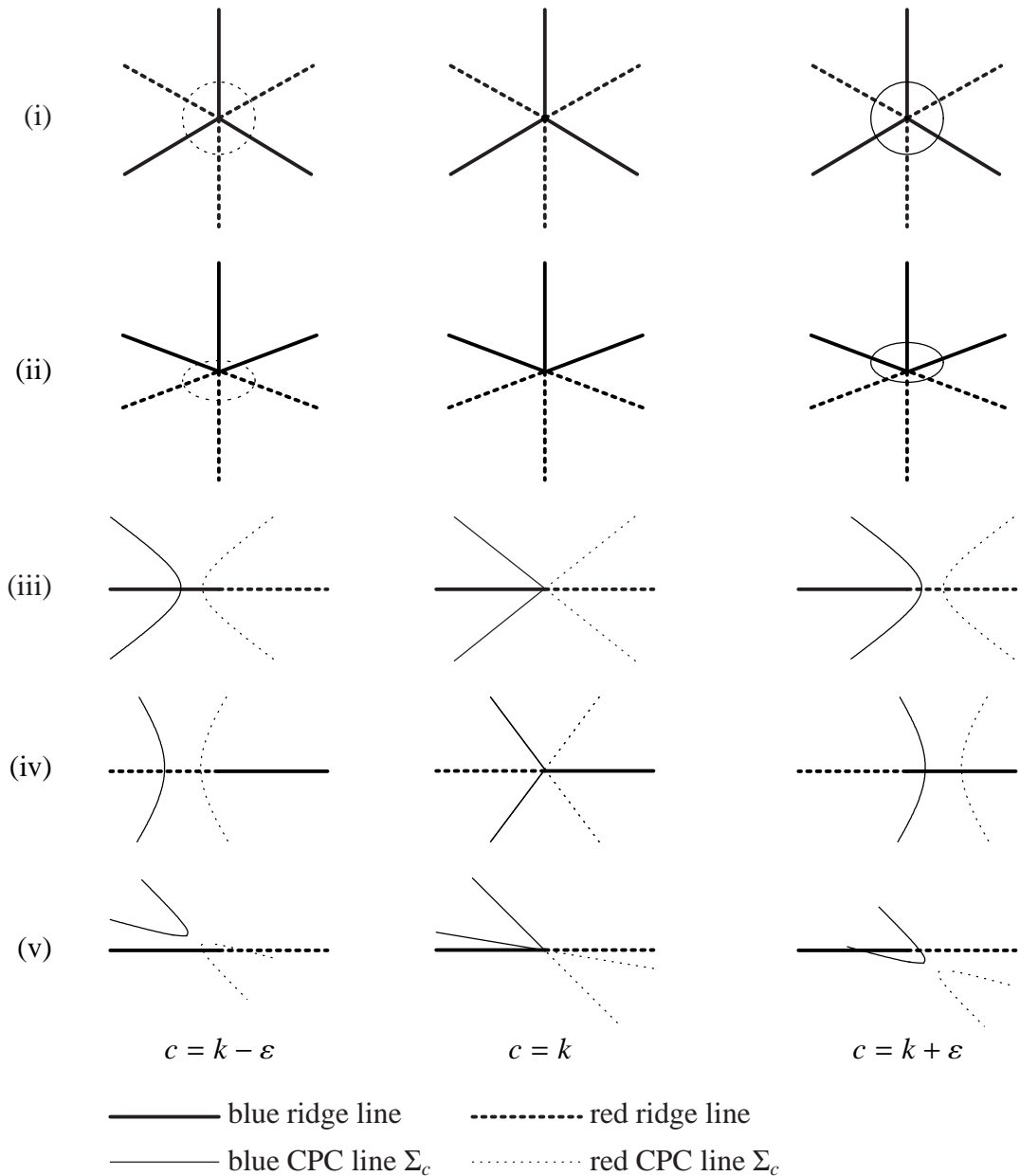}
  \caption
  {
  Bifurcations of 
  \corC{the CPC} lines 
  \corC{near an elliptic umbilic (i) and (ii), and  a hyperbolic umbilic
  (iii) \colR{through} (v)}
  , where $\varepsilon$ is a
  small positive number.
  }
  \label{fig:CPCLineAtUmbilic}
 \end{center}
\end{figure} 

As shown 
\corC{in} Figure~\ref{fig:CPCLineAtUmbilic},
there are three intersection points of the CPC line and the
\corC{same colored} ridge line 
near \corC{an} elliptic umbilic, and there is one such 
\corC{intersection} point near \corC{a} hyperbolic umbilic\corC{, in the
generic context}. 

\section{Singularities of parallel surfaces}
In this section we present our main theorem.

\subsection{Augmented distance squared functions}\label{sec:ADSU}
Let $f:(\BB{R}^n,{\bf 0})\to(\BB{R},{\bf 0})$ be a smooth function
germ. We say that a smooth function germ
$F:(\BB{R}^{n}\times\BB{R}^{r},{\bf 0})\to(\BB{R},0)$ is an \corR{{\it
unfolding}} of $f$ if $F({\bf u},{\bf 0})=f({\bf u})$. 
We define the \corR{{\it discriminant set}} of $F$ by 
\[
\mathcal{D}(F)=\biggl\{{\bf x}\in\BB{R}^r\ \corR{;}\ 
F({\bf u},{\bf x})=\pd{F}{u_1}({\bf u},{\bf x})=\cdots\pd{F}{u_n}({\bf
u},{\bf x})=0\corR{\text{ for some }{\bf u}\in U}\biggr\},
\]
where $({\bf u},{\bf
x})=(u_1,\corC{\ldots},u_n,x_1,\corC{\ldots},x_r)\in(\BB{R}^{n}\times\BB{R}^{r},{\bf
0})$. We say \colR{that $F$} is a \corR{$\mathcal{K}$-{\it versal unfolding}} if any 
unfolding $G:(\BB{R}^{n}\times\BB{R}^{s},{\bf 0})\to(\BB{R},0)$ of $f$
is representable in the form
\[
 G({\bf u},{\bf y})=h({\bf u},{\bf y})\cdot F(\Psi({\bf u},{\bf
 y}),\psi({\bf y})),
\] 
where $\Psi:(\BB{R}^n\times
\BB{R}^s,{\bf 0})\to(\BB{R}^n,{\bf 0})$ is a smooth map germ with
$\Psi({\bf u},{\bf 0})={\bf u}$, 
$\psi:(\BB{R}^s,{\bf 0})\to(\BB{R}^r,{\bf 0})$ is a smooth map germ
with $\psi({\bf 0})=0$ and $h:(\BB{R}^n \times
\BB{R}^s,{\bf 0})\to\BB{R}$ is a smooth function germ with
$h({\bf 0},{\bf 0})\ne0$ (cf. \cite[\corC{\S 8}]{A1}).
This condition is equivalent to \colR{the equality}
\[
  \mathcal{E}_n=
  \bigg\langle
  \dfrac{\partial f}{\partial u_1},\cdots,
  \dfrac{\partial f}{\partial u_n},f
  \bigg\rangle_{\mathcal{E}_n}
  +
  \bigg\langle
  \left.\dfrac{\partial F}{\partial x_1}\right|_{\BB{R}^n\times\{{\bf 0}\}},\cdots,
  \left.\dfrac{\partial F}{\partial x_r}\right|_{\BB{R}^n\times\{{\bf 0}\}}
  \bigg\rangle_{\BB{R}}
  +\mathcal{M}_n^{k+1}
\]
when $f({\bf u})$ is $k$-determined (see \cite[\S 3]{Wall} and \cite[p.75]{lu1}).
Here, $\mathcal{E}_n$ is the set of smooth function germs
$(\BB{R}^n,{\bf 0})\to\BB{R}$, which is the local ring with
the unique maximal ideal
$\mathcal{M}_n=\{f\in\mathcal{E}_n\ \corR{;}\ f({\bf 0})=0\}$.
We say that two function germs $f$ and $g:(\BB{R}^n,{\bf
0})\to(\BB{R},{\bf 0})$ are \corR{$\mathcal{K}$-{\it equivalent}} if
there exist a diffeomorphism germ $\psi:(\BB{R}^n,{\bf
0})\to(\BB{R}^n,{\bf 0})$ and a smooth function germ $h:(\BB{R}^n,{\bf
0})\to\BB{R}$ with $h({\bf 0})\ne0$ such that $g({\bf u})=h({\bf
u})\cdot f\circ \psi({\bf u})$. 
\colR{Let} $F$, $G:(\BB{R}^n\times
\BB{R}^r,{\bf 0})\to(\BB{R},{\bf 0})$ \colR{be} $\mathcal{K}$-versal
unfoldings of $\mathcal{K}$-equivalent function germs $f$, $g$,
respectively.
Then, there exist a diffeomorphism germ
$\tilde{\Psi}:(\BB{R}^n\times\BB{R}^r,{\bf 0})\to(\BB{R}^n\times\BB{R}^r,{\bf 0})$,
$({\bf u},{\bf x})\mapsto(\Psi({\bf u},{\bf x}),\psi({\bf x}))$ and a smooth 
function germ $h:(\BB{R}^n\times\BB{R}^r,{\bf 0})\to\BB{R}$ with
$h({\bf 0},{\bf 0})\ne0$ such that
\[
 G({\bf u},{\bf x})=h({\bf u},{\bf x})\cdot F(\Psi({\bf u},{\bf x}),\psi({\bf x})).
\]
(cf. \cite[\corC{\S 8}]{A1}).
Moreover, 
\colR{a calculation} shows \colR{the equality} $\mathcal{D}(F)=\psi(\mathcal{D}(G))$.

In order to investigate singularities of parallel surfaces, 
we consider the functions 
\begin{align*}
\Phi^{t}:U\times\BB{R}^{3}\to\BB{R},\qquad\textrm{defined by}\qquad
(u,v,x,y,z)\mapsto-\frac{1}{2}\left(\|(x,y,z)-g(u,v)\|^2-{t_{0}}^{2}\right),
\end{align*}
where $t_{0}\in\BB{R}\setminus\{0\}$,
and 
\begin{align*}
\Phi:U\times\BB{R}^{4}\to\BB{R},\qquad\textrm{defined by}\qquad
(u,v,x,y,z,t)\mapsto-\frac{1}{2}\left(\|(x,y,z)-g(u,v)\|^2-t^{2}\right).
\end{align*}
We call them \corC{{\it augmented distance squared functions}}.

\corC{Calculating the discriminant set of $\Phi^t$}, we have 
\[
  \mathcal{D}(\Phi^{t})=\{(x,y,z)\in\BB{R}^{3}\ \corR{;}\
 (x,y,z)=g(u,v)+t_{0}{\bf n}(u,v)\corR{\text{ for some }(u,v)\in\BB{R}^2}\},
\]
which is the parallel surface of $g$ at a distance $t_{0}$.
Besides, the discriminant set of $\Phi$ is given by
\begin{align*}
 \mathcal{D}(\Phi)=\{(x,y,z,t)\in\colB{\BB{R}^{4}} \ \corR{;}\
 (x,y,z)=g(u,v)+t{\bf n}(u,v)\corR{\text{ for some }(u,v)\in\BB{R}^2}\}.
\end{align*}
Its intersection with the \corC{hyper}plane $t=t_{0}$ is the parallel surface of $g$
at 
distance $t_{0}$. 

We take 
\colB{points $p\in U$, and $q=(x_{0},y_{0},z_{0})\in\BB{R}^3$ 
or $q=(x_{0},y_{0},z_{0},t_{0})\in\BB{R}^4$} where
\[
 (x_{0},y_{0},z_{0})=
 \colB{g(p)+t_{0}{\bf n}(p), 
 \colR{\qquad t_{0}=\frac{1}{\kappa_{i}(p)},}}
\]
possibly with 
\colB{$\kappa_{1}(p)=\kappa_{2}(p)$,} 
and set \colB{$\varphi(u,v)=\Phi^{t}(u,v,q)$ or
$\varphi(u,v)=\Phi(u,v,q)$.} 
Then the augmented distance functions $\Phi$ and $\Phi^t$ are the
unfoldings of $\varphi$.

\corC{If $\varphi$ is $\mathcal{K}$-equivalent to $A_2$ (\colR{resp.\ }$A_3$)
and $\Phi^t$ is a $\mathcal{K}$-versal unfolding of $\varphi$, then the
discriminant set of $\Phi^t$ is locally diffeomorphic to the
discriminant set of the versal unfolding $G:(U\times\BB{R}^{3},{\bf
0})\to(\BB{R},0)$, 
\[
 G(u,v,x,y,z)=u^3\pm v^2+x+yu
 \colR{\quad(\text{resp.\ }G(u,v,x,y,z)=u^4\pm v^2+x+yu+zu^2),}
\]
of $g(u,v)=u^3\pm v^2$ (resp.~$g(u,v)=u^4\pm v^2$). \corC{The
singularity of} the discriminant set of $G$ is \corC{the} cuspidal edge
(resp.~swallowtail).} 

Here, the \corR{{\it cuspidal edge}} is a set locally diffeomorphic to the image of
a map germ $CE:(\BB{R}^2,{\bf 0})\to(\BB{R}^3,{\bf
0}),\,(u,v)\mapsto(u,v^2,v^3)$ \corC{and}
the \corR{{\it swallowtail}} is a a set locally diffeomorphic to the image of a map 
germ
$SW:(\BB{R}^2,{\bf 0})\to(\BB{R}^3,{\bf
0}),\,(u,v)\mapsto(u,3v^4+uv^2,4v^3+2uv)$.
The pictures of the cuspidal edge and the swallowtail are shown in
Figure \ref{fig:sing1}. 
\begin{figure}[htbp] 
\begin{center}
 \includegraphics[width=0.6\textwidth]{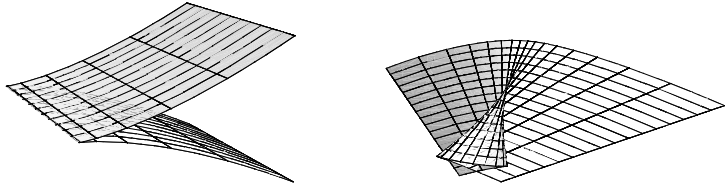}
\end{center}
\caption{From left to right: Cuspidal edge, Swallowtail.}
\label{fig:sing1}
\end{figure}

\corC{If $\varphi$ is $\mathcal{K}$-equivalent to $A_4$
(resp.~$D_4^\pm$) and
$\Phi
$ is a
$\mathcal{K}$-versal unfolding of $\varphi$, then the discriminant set
of $\Phi$ is locally diffeomorphic to the discriminant set of the versal
unfolding $G:(U\times\BB{R}^{4},{\bf 0})\to(\BB{R},0)$, 
\[
 G(u,v,x,y,z,t)=u^4\pm
 v^2+x+yu+zu^2+tu^3\quad
 \colR{(\text{resp.~}G(u,v,x,y,z,t)=u^2 v\pm v^3+x+y u+z v+t v^2),}
\]
of $g(u,v)=u^4\pm v^2$ (resp.~$g(u,v)=u^2v\pm v^3$).
\corC{The singularity of} the discriminant set of $G$ is a butterfly
(resp.~$D_4^\pm$ singularities).} 

\corC{Here,} the \corR{{\it butterfly}} is a set locally diffeomorphic
to the image of a map germ $BF:(\BB{R}^3,{\bf 0})\to(\BB{R}^4,{\bf
0}),\,(u,v,w)\mapsto(u,5v^4+2uv+3v^2w,4v^5+uv^2+2v^3w,w)$ and 
the \corR{{\it $4$-dimensional $D_4^\pm$ singularity}} is a set locally
diffeomorphic to the image of a map germ $FD^{\pm}:(\BB{R}^3,{\bf
0})\to(\BB{R}^4,{\bf
0}),\,(u,v,w)\mapsto(uv,u^2+2vw\pm3v^2,2u^2v+v^2w\pm2v^3,w)$. 



\subsection{Criteria for singularities of fronts in $\BB{R}^3$}
It is well known that the parallel surface $g^t$ is a front.
Fronts were first studied in details by Arnol'd and Zakalyukin. 
They showed that the generic singularities of fronts in $\BB{R}^3$
are cuspidal edges and swallowtails.
Moreover, they showed that the singularities of the bifurcations in
generic one parameter families of fronts in $\BB{R}^3$ are cuspidal
lips, cuspidal beaks, cuspidal butterflies and 3-dimensional $D_4^\pm$
singularities (cf. \cite{A1}).

Here, the \corR{{\it cuspidal lips}} is a set locally diffeomorphic to
the image of a map germ $CLP:(\BB{R}^2,{\bf 0})\to(\BB{R}^3,{\bf
0}),\,(u,v)\mapsto(3u^4+2u^2v^2,u^3+uv^2,v)$, the \corR{{\it cuspidal beaks}}
is a set locally diffeomorphic to the image of a map germ
$CBK:(\BB{R}^2,{\bf 0})\to(\BB{R}^3,{\bf
0}),\,(u,v)\mapsto(3u^4-2u^2v^2,u^3-uv^2,v)$, the \corR{{\it cuspidal
butterfly}} is a set of the image of a map germ $CBF:(\BB{R}^2,{\bf
0})\to(\BB{R}^3,{\bf 0}),\,(u,v)\mapsto(4u^5+u^2v,5u^4+2uv,v)$ and the
\corR{{\it $3$-dimensional $D_4^+$ singularity}} (resp.~\corR{{\it $D_4^-$
singularity}}) is a set of the image of a map germ $TD^+:(\BB{R}^2,{\bf
0})\to(\BB{R}^3,{\bf 0}),\,(u,v)\mapsto(uv,u^2+3v^2,u^2v+v^3)$
(resp.~$TD^-:(u,v)\mapsto(uv,u^2-3v^2,u^2v-v^3)$). Their pictures are
shown in Figure \ref{fig:sing2}. 
\begin{figure}[htbp] 
\begin{center}
 \includegraphics[width=\textwidth]{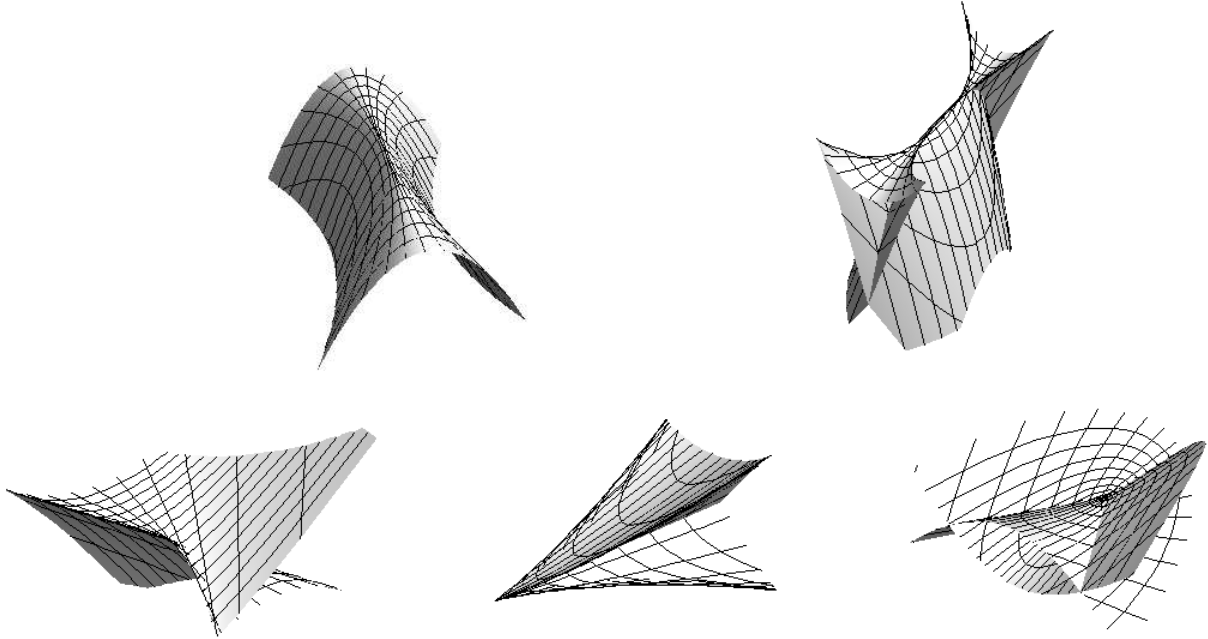}
\end{center}
\caption{From top left to bottom right: Cuspidal lips, Cuspidal beaks,
 Cuspidal butterfly, 3-dimensional $D_4^+$ singularity, 3-dimensional
 $D_4^-$ singularity.}
\label{fig:sing2}
\end{figure}

Recently, criteria for these singularities are shown in
\colR{\cite{IS, IST, KRSUY, S1}}. To present these criteria, we prepare basic
\corC{notions} of fronts in $\BB{R}^3$. A smooth map $f:U\to\BB{R}^3$ is
called a \corR{{\it front}} if there exists a unit vector field $\nu$ of
$\BB{R}^3$ along $f$ such that $L_f=(f,\nu):U\to T_1\BB{R}^3$ is a
Legendrian immersion, where $T_1\BB{R}^3$ is the unit tangent bundle of
$\BB{R}^3$ (cf. \cite{A1}, see also \cite{KRSUY}). For a front $f$, we
define \corC{a function} $\lambda:U\to\BB{R}$ by
$\lambda(u,v)=\det(f_{u},f_{v},\nu)$. \corC{The function $\lambda$ is
called a} \corR{{\it discriminant function}} \corC{of $f$.} The set of
singular points $S(f)$ of $f$ is the zero set of $\lambda$. A singular
point $p\in U$ of $f$ is \corC{said to be} 
\corC{{\it non-degenerate}} if $d\lambda(p)\neq 0$. 
Let $p$ be a non-degenerate singular point of a front $f$. Then $S(f)$
is parameterized by a 
\corC{regular} curve $\gamma(t):(-\varepsilon,\varepsilon)\to U$ 
near $p$. Moreover, there exists a 
\corC{a unique direction $\eta(t)\in T_{\gamma(t)}U$ up to scalar
multiplications} 
such that 
\corC{$df(\eta(t))=0$.}
We call $\eta(t)$ the 
\corR{{\it null direction}.} Under these notations, we present the
criterion for \corC{the} cuspidal butterfly.
\begin{theorem}
 [\cite{IS}]\label{thm:front1}
 Let $f:U\to\BB{R}^{3}$ be a front and \colB{let $p\in U$ be}
 a non-degenerate singular point of $f$. Then the germ of the front $f$
 at $p$ is $\mathcal{A}$-equivalent to the map germ $CBF$ if and only if
 $\eta\lambda(p)=\eta^2\lambda(p)=0$ and $\eta^3\lambda(p)\neq 0$.
\end{theorem}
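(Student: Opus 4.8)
The plan is to adapt the arguments of \cite{KRSUY} for the cuspidal edge and swallowtail criteria (see also \cite{IST,S1}), carried one order further. Fix the non-degenerate singular point $p$ and extend the null direction to a non-vanishing vector field $\eta$ near $p$. One first records the standard fact that, when $\eta\lambda(p)=0$, the value $\eta^2\lambda(p)$ does not depend on the chosen extension of $\eta$, and that, when in addition $\eta^2\lambda(p)=0$, the value $\eta^3\lambda(p)$ is likewise extension-independent; thus the three conditions in the statement are intrinsic, and the property ``the first non-vanishing member of $\eta\lambda(p),\eta^2\lambda(p),\eta^3\lambda(p)$ is the $j$-th one'' is invariant under $\mathcal{A}$-equivalence of fronts at a non-degenerate singular point. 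Hence the ``only if'' implication reduces to a direct check for the model $CBF$ itself: from $CBF(u,v)=(4u^5+u^2v,\,5u^4+2uv,\,v)$ one gets $CBF_u=2(10u^3+v)(u,1,0)$, so $S(CBF)=\{v+10u^3=0\}$ is a regular curve whose null direction at the origin is $\partial_u$; writing the unit normal $\nu$ of the Legendrian lift and $\lambda=\det(CBF_u,CBF_v,\nu)=2(10u^3+v)\,h$ with $h(0,0)\neq0$, differentiation along $\partial_u$ gives $\eta\lambda=\eta^2\lambda=0$ and $\eta^3\lambda=120\,h(0,0)\neq0$ at the origin.

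For the converse we first put $f$ into an adapted local form. Since $p$ is non-degenerate, $S(f)$ is a regular curve through $p$, and since $\eta\lambda(p)=0$ the null direction is tangent to $S(f)$ at $p$. Choose coordinates $(u,v)$ centred at $p$ with $S(f)=\{v=0\}$ and $\eta(p)=\partial_u$; then along $S(f)$ the null direction is spanned by $\partial_u+\beta(u)\partial_v$ for a smooth function $\beta$ with $\beta(0)=0$, and $\lambda=v\,\mu(u,v)$ where $\mu(0,0)=\lambda_v(p)\neq0$ by non-degeneracy. A short computation now expresses the conditions of the statement through $\beta$: one finds that $\eta^2\lambda(p)$ is a non-zero multiple of $\beta'(0)$, and that, when $\beta'(0)=0$, $\eta^3\lambda(p)$ is a non-zero multiple of $\beta''(0)$. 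So the hypotheses become $\beta(0)=\beta'(0)=0$ and $\beta''(0)\neq0$, i.e.\ the null direction field along $S(f)$ has contact of order exactly two with $S(f)$ at $p$ --- the ``$A_4$'' configuration. Transporting $CBF$ to coordinates in which its singular set is $\{v=0\}$, its null field becomes $\partial_u+30u^2\partial_v$, so after rescaling $u$ and adjusting $v$ we may assume $f$ and $CBF$ share the same singular set and the same null direction field.

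It remains to remove the remaining freedom and conclude an $\mathcal{A}$-equivalence $f\sim CBF$, and this is the substantive step. The plan is to perform successive coordinate changes in source and target that preserve the class of front-maps, normalising the jet of $f$ to that of $CBF$ and then invoking the finite $\mathcal{A}$-determinacy of $CBF$ to absorb the higher-order terms. The main obstacle is precisely that each such reduction must be carried out \emph{within} the class of fronts, so that the unit normal --- and hence $\lambda$ and the null direction --- is tracked consistently throughout; the cleanest way to enforce this is to pass to the Legendrian lift $L_f=(f,\nu)$ and argue with a generating family $F(q,x)$, $x\in\BB{R}^3$, of $L_f$. One shows that $S(f)$, $\lambda$ and the null direction are encoded by $\partial_qF$, $\partial_q^2F$ and the kernel direction of the Legendre projection, so that $\eta\lambda(p)=\eta^2\lambda(p)=0\neq\eta^3\lambda(p)$ is equivalent to $q\mapsto F(q,0)$ being an $A_4$ function germ with $F$ a versal unfolding of it; the classification of simple Legendrian germs (cf.\ \cite{A1}) then identifies $L_f$ with the $A_4$ model and $f$ with $CBF$, and in particular re-derives the ``only if'' direction.
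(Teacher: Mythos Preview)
The paper does not contain a proof of this theorem: it is quoted verbatim from \cite{IS} (Izumiya--Saji) and used as a black box in the proof of Theorem~\ref{cor:front}~(1). There is therefore no ``paper's own proof'' to compare your attempt against.

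That said, a brief comment on your sketch. The ``only if'' direction is fine: your factorisation $CBF_u=2(10u^3+v)(u,1,0)$ is correct and the computation of $\eta^k\lambda(0,0)$ goes through. For the converse, your generating-family argument needs more care. A front in $\BB{R}^3$ has a generating family with only three unfolding parameters, whereas an $A_4$ germ requires four parameters for a versal unfolding; hence the assertion that ``$F$ is a versal unfolding of it'' cannot hold as written, and the simple-Legendrian classification does not immediately apply. The cuspidal butterfly is precisely the non-generic front arising from a \emph{non-versal} $A_4$ unfolding (it is a hyperplane section of the $4$-dimensional butterfly, as the present paper in fact exploits in Theorem~\ref{thm:main1}~(3)). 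The route taken in \cite{IS} is closer to your first plan: normalise coordinates so that $S(f)=\{v=0\}$ and the null field is $\partial_u+\beta(u)\partial_v$, use the front condition to write $f$ in an adapted normal form, and then check directly that $\beta(0)=\beta'(0)=0\neq\beta''(0)$ forces the $5$-jet of $f$ to agree (after source and target changes) with that of $CBF$, after which finite $\mathcal{A}$-determinacy of $CBF$ finishes the job. If you want to repair your argument, drop the versality claim and instead carry out that jet-normalisation explicitly.
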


Here, two map germs $f_1$,
$f_2:(\BB{R}^{\corC{2}},{\bf 0})\to(\BB{R}^{\corC{3}},{\bf 0})$ are
\corR{{\it $\mathcal{A}$-equivalent}} if there exist diffeomorphism
germs $\psi_1:(\BB{R}^{\corC{2}},{\bf 0})\to(\BB{R}^{\corC{2}},{\bf 0})$
and $\psi_2:(\BB{R}^{\corC{3}},{\bf 0})\to(\BB{R}^{\corC{3}},{\bf 0})$
such that 
\corC{$\psi_2\circ f_1=f_2\circ \psi_1$, and} 
$\eta\lambda$ denotes the directional derivative of $\lambda$ in the
direction of $\eta$.

We now turn to degenerate singularities.
Let $p$ be a degenerate singular point of the front $f$.
If $\rank(df_p)=1$, then there exists the non-zero vector field $\eta$
near $p$ such that 
\corC{if $q\in S(f)$ then $df_q(\eta(q))=0$.}
Criteria for degenerate singularities are as follows:

\begin{theorem}
 [\cite{IST}]\label{thm:front2}
 Let $f:U\to\BB{R}^{3}$ be a front and \colB{let $p\in U$ be}
 a degenerate singular point of $f$. 
\begin{enumerate}
 \vspace{-8pt}
 \setlength{\parskip}{0cm} 
 \setlength{\itemsep}{0pt} 
 \item The germ of the front $f$ at $p$ is $\mathcal{A}$-equivalent
       to the map germ $CLP$ if and only if $\rank(df_p)=1$ and
       \colR{$\det(\mathrm{Hess}\,\lambda(p))>0$}, where 
       $\det(\mathrm{Hess}\,\lambda(p))$ denotes the determinant of the
       Hessian matrix of $\lambda$ at $p$. 
       
 \item The germ of the front $f$ at $p$ is $\mathcal{A}$-equivalent
       to the map germ $CBK$ if and only if $\rank(df_p)=1$,
       $\det(\mathrm{Hess}\,\lambda(p))<0$ and $\eta^2\lambda(p)\neq
       0$.
\end{enumerate}
\end{theorem}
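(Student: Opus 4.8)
\emph{Proof strategy.}\quad The plan is to fix adapted coordinates at the corank-one singular point using the front structure, to read $\lambda$ and its derivatives along the null field $\eta$ off a finite jet, and then to match that jet with the jet of $CLP$, resp.\ $CBK$. Concretely, since $\rank(df_p)=1$ the kernel of $df_p$ is a line spanned by $\eta(p)$; I would choose coordinates $(u,v)$ on $U$ centred at $p$ with $\eta=\partial/\partial u$ at $p$, and an orthonormal coordinate system on $\BB{R}^3$ with $\nu(p)=(0,0,1)$. As $f$ is a front, $L_f=(f,\nu)$ is an immersion, so $d\nu_p(\eta)\ne{\bf 0}$; using this one reparametrizes so that, modulo $\mathcal{A}$-equivalence, $f$ takes the form $f(u,v)=\bigl(v,\,a(u,v),\,b(u,v)\bigr)$ with $a,b\in\mathcal{M}_2^2$, with $b$ tied to $a$ by the front condition $\langle df,\nu\rangle\equiv0$, and $\lambda=\det(f_u,f_v,\nu)$ becomes an explicit smooth function of the jets of $a$. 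Since $p$ is assumed degenerate, $d\lambda(p)=0$, so $\mathrm{Hess}\,\lambda(p)$ is well defined and $\eta^2\lambda(p)=\mathrm{Hess}\,\lambda(p)(\eta(p),\eta(p))$; the latter is automatically non-zero when $\det(\mathrm{Hess}\,\lambda(p))>0$ and may vanish only when $\det(\mathrm{Hess}\,\lambda(p))<0$, which explains why the extra hypothesis in $(2)$ is absent from $(1)$.

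The ``only if'' direction reduces to a computation on the two models. For $CLP$ one finds $f_u\times f_v=(3u^2+v^2)(1,-4u,4u^2v)$, hence $\lambda=(3u^2+v^2)\sqrt{1+16u^2+16u^4v^2}$ with $2$-jet $3u^2+v^2$ at the origin; thus $\rank(df_0)=1$, $d\lambda(0)=0$ and $\det(\mathrm{Hess}\,\lambda(0))>0$. The same computation for $CBK$ gives $\lambda$ with $2$-jet $3u^2-v^2$, so $\det(\mathrm{Hess}\,\lambda(0))<0$, and with $\eta=\partial/\partial u$ one gets $\eta^2\lambda(0)=6\ne0$. Since the rank of $df$ at a singular point, the sign of $\det(\mathrm{Hess}\,\lambda)$, and the vanishing of $\eta^2\lambda$ are invariant under diffeomorphisms of source and target (up to positive multiplicative factors from the chain rule and the induced change of $\nu$), every germ $\mathcal{A}$-equivalent to $CLP$, resp.\ $CBK$, inherits the stated conditions.

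For the ``if'' direction the strategy is to apply the Morse lemma to $\lambda$: in case $(1)$ the definite Hessian makes $S(f)=\lambda^{-1}(0)$ an isolated point and one may assume $\lambda=u^2+v^2$, while in case $(2)$ the indefinite Hessian makes $S(f)$ a node of two transverse smooth branches and one may assume $\lambda=u^2-v^2$. Working in the normal form above and using the remaining $\mathcal{A}$-freedom one then reduces the germ so that a bounded jet of $a$ coincides with the corresponding jet of the second component of $CLP$, resp.\ $CBK$; in case $(2)$ the hypothesis $\eta^2\lambda(p)\ne0$ is exactly what controls the contact of $f$ with each branch of $S(f)$, excluding cuspidal butterfly or worse there (compare the condition $\eta\lambda=\eta^2\lambda=0$ for $CBF$ in Theorem~\ref{thm:front1}). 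A finite-determinacy statement for front germs of this class then upgrades the jet coincidence to an $\mathcal{A}$-equivalence with the model.

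The main obstacle is this last direction. One must carry out all coordinate reductions within the subgroup of changes preserving the class of fronts — equivalently, perform the constructions on the Legendrian lift $L_f$ — keep careful track of which coefficients of the jet of $a$ are removable and which are $\mathcal{A}$-invariant, and establish the relevant finite-determinacy result. Verifying that the sign of $\det(\mathrm{Hess}\,\lambda(p))$ together with $\eta^2\lambda(p)\ne0$ singles out \emph{exactly} the $CLP$, resp.\ $CBK$, orbit — and not a coarser or finer stratum — is the technical heart of the argument.
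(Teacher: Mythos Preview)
The paper does not prove this theorem: it is quoted verbatim from \cite{IST} (Izumiya--Saji--Takahashi) and used as a black box in the proof of Theorem~\ref{cor:front}~(2). There is therefore no in-paper argument to compare your sketch against.

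As for the sketch itself, it is a plausible outline of the kind of normal-form reduction carried out in \cite{IST}, and your ``only if'' computations on the models $CLP$ and $CBK$ are correct. But what you have written is explicitly a strategy, not a proof: the ``if'' direction is asserted rather than executed. The hard work---choosing adapted coordinates compatible with the Legendrian lift, showing which jet coefficients can be killed by $\mathcal{A}$-moves within the front class, and invoking or proving the relevant finite-determinacy statement---is exactly what \cite{IST} does and what you have deferred. In particular, the claim that ``a finite-determinacy statement for front germs of this class then upgrades the jet coincidence to an $\mathcal{A}$-equivalence'' is the crux, and it needs either a citation or an argument. Without that, the proposal is an outline of the correct approach but not a proof.
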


\begin{theorem}
 [\cite{S1}]\label{thm:front3}
 Let $f:U\to\BB{R}^{3}$ be a front and \colB{let $p\in U$ be} a
 degenerate singular point of $f$. Then the germ of the front $f$ at 
 $p$ is $\mathcal{A}$-equivalent to the map germ $TD^+$
 $($resp.~$TD^-$$)$ if and only if $\rank(df)_p=0$ and
 $\det(\mathrm{Hess}\ \lambda(p))<0$ $($resp.~$\det(\mathrm{Hess}\
 \lambda(p))>0$$)$.
\end{theorem}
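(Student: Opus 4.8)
The plan is to prove both implications by reducing everything to the $\mathcal{K}$-classification of corank~$2$ function germs and to an explicit formula for $\det(\mathrm{Hess}\,\lambda)$ at $p$ in terms of the $2$-jet of $f$. Write $\nu$ for the unit normal field of the front, so that $\langle\nu,f_u\rangle\equiv\langle\nu,f_v\rangle\equiv0$ and $\lambda=\det(f_u,f_v,\nu)$. First I would record the local computation at a rank~$0$ singular point: since $f_u(p)=f_v(p)=0$, differentiating the two identities above and evaluating at $p$ shows that $f_{uu}(p),f_{uv}(p),f_{vv}(p)$ all lie in the tangent plane $\nu(p)^{\perp}$, and in particular $d\lambda(p)=0$ automatically. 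Differentiating $\lambda$ twice at $p$ (every term containing $f_u(p)$ or $f_v(p)$ drops out) gives
\[
 \mathrm{Hess}\,\lambda(p)=
 \begin{pmatrix}
  2[f_{uu},f_{uv}] & [f_{uu},f_{vv}]\\
  [f_{uu},f_{vv}] & 2[f_{uv},f_{vv}]
 \end{pmatrix}_{p},
\]
where $[\,\cdot\,,\,\cdot\,]$ denotes the $2\times2$ determinant taken inside $\nu(p)^{\perp}$. Identifying $\nu(p)^{\perp}$ with $\BB{R}^2$, the $2$-jet of $f-f(p)$ there is a pair of binary quadratic forms $(Q_1,Q_2)$, and expanding shows $\det\bigl(\partial(Q_1,Q_2)/\partial(u,v)\bigr)=[f_{uu},f_{uv}]u^2+[f_{uu},f_{vv}]uv+[f_{uv},f_{vv}]v^2$; hence
\[
 \det\bigl(\mathrm{Hess}\,\lambda(p)\bigr)=-\,\mathrm{disc}\Bigl(\det\bigl(\partial(Q_1,Q_2)/\partial(u,v)\bigr)\Bigr).
\]
So the sign of $\det(\mathrm{Hess}\,\lambda(p))$ records whether the Jacobian form of the quadratic part of $f$ is definite ($>0$) or indefinite ($<0$).

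For the necessity part I would simply evaluate this for $f=TD^{\pm}$: reading off $\nu$ and the quadratic part from the explicit formulas, the Jacobian form is indefinite for $TD^{+}$ and definite for $TD^{-}$, so $\det(\mathrm{Hess}\,\lambda)<0$ resp.\ $>0$ at the origin. Since $\lambda$ is well defined up to multiplication by a nonvanishing function and composition with a source diffeomorphism, and $d\lambda(p)=0$, the sign of $\det(\mathrm{Hess}\,\lambda(p))$ is invariant under $\mathcal{A}$-equivalence; this settles the ``only if'' direction.

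For sufficiency, assume $\rank(df_p)=0$ and $\det(\mathrm{Hess}\,\lambda(p))\ne0$. I would realise the front germ as the discriminant set $\mathcal{D}(F)$ of a Morse family (generating family) $F:(U\times\BB{R}^3,(p,q_0))\to(\BB{R},0)$ of the Legendrian lift of $f$, and set $\varphi=F(\,\cdot\,,q_0)$. Then $\rank(df_p)=0$ is equivalent to $\mathrm{Hess}\,\varphi(p)=0$, i.e.\ $\varphi$ has corank~$2$, so $j^3\varphi$ is a binary cubic form. The key point is the dictionary between the $2$-jet of $\mathcal{D}(F)$ and the $3$-jet of $\varphi$: unwinding the relation among $f$, $\nu$ and $\partial F/\partial q$, the Jacobian form of $(Q_1,Q_2)$ is, up to a positive factor, the Hessian covariant of $j^3\varphi$, so by the displayed formula $\det(\mathrm{Hess}\,\lambda(p))$ equals, up to a positive factor, the discriminant of $j^3\varphi$. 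Hence $\det(\mathrm{Hess}\,\lambda(p))\ne0$ forces $j^3\varphi$ to have three distinct roots over $\BB{C}$, that is $\varphi\sim_{\mathcal{K}}D_4^{\pm}$, with $D_4^{+}$ ($=u^2v+v^3$, one real root) exactly when $\det(\mathrm{Hess}\,\lambda(p))<0$ and $D_4^{-}$ ($=u^2v-v^3$, three real roots) when it is $>0$. The same non-degeneracy makes $F$ a $\mathcal{K}$-versal unfolding of $\varphi$: the number of unfolding parameters is $3=\mathcal{K}_e\text{-codim}\,D_4^{\pm}$, and the initial velocities $\partial F/\partial q_i|_{U\times\{q_0\}}$ span the relevant quotient precisely when the cubic is non-degenerate. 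Since the standard versal unfolding of $D_4^{\pm}$ is also $\mathcal{K}$-versal and has the germ of $TD^{\pm}$ as discriminant set, the result on $\mathcal{K}$-versal unfoldings of $\mathcal{K}$-equivalent germs recalled in Section~\ref{sec:ADSU} gives that $f$ is $\mathcal{A}$-equivalent to $TD^{\pm}$.

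The hardest step is this last dictionary: showing at once that $\det(\mathrm{Hess}\,\lambda(p))\ne0$ is equivalent to ``$\varphi\sim_{\mathcal{K}}D_4^{\pm}$ and $F$ is $\mathcal{K}$-versal'' and that the sign picks out the superscript. It can be carried out by choosing the Morse family carefully and doing the corresponding linear algebra, or, avoiding generating families altogether, by proving $3$-$\mathcal{A}$-determinacy of the front germ directly and cancelling the higher-order terms using the front relations together with $\det(\mathrm{Hess}\,\lambda(p))\ne0$; either way some bookkeeping is needed to match the labels $D_4^{\pm}$ with $TD^{\pm}$ through the explicit generating family $G(u,v,q)=u^2v\pm v^3+q_3-q_2v-2q_1u$.
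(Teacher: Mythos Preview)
The paper does not contain a proof of this statement: Theorem~\ref{thm:front3} is quoted from \cite{S1} and used as a black box in the proof of Theorem~\ref{cor:front}~(3). There is therefore no ``paper's own proof'' to compare your attempt against.

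That said, a brief comment on your outline. The local formula for $\mathrm{Hess}\,\lambda(p)$ at a rank-$0$ point and the identification of its determinant with (minus) the discriminant of the Jacobian form of the quadratic pair $(Q_1,Q_2)$ are correct and are essentially the computation Saji carries out in \cite{S1}. Where your argument becomes thin is the passage from ``$\mathcal{K}$-versal unfoldings of $\mathcal{K}$-equivalent germs have diffeomorphic discriminant sets'' (which is what Section~\ref{sec:ADSU} provides) to ``$f$ is $\mathcal{A}$-equivalent to $TD^{\pm}$ as a map germ''. Diffeomorphism of images does not by itself give $\mathcal{A}$-equivalence of parametrizations; one needs the full Legendrian equivalence coming from equivalence of generating families, and you have not checked that the generating family you construct is Legendre-stable (equivalently $\mathcal{R}^+$-versal), only $\mathcal{K}$-versal. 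You also assert that the initial speeds $\partial F/\partial q_i$ span the required quotient ``precisely when the cubic is non-degenerate'', but this depends on the specific choice of $F$, not just on $\varphi$, and needs an actual computation. Saji's proof in \cite{S1} avoids generating families entirely and works directly with the front map, establishing a normal form by an explicit coordinate change; your alternative route via Morse families is viable but would need these two points filled in to be complete.
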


\subsection{Singularities of parallels surfaces}
Now we are ready to state our main theorem.
\begin{theorem}
 \label{thm:main1}
 Let $g:U\to\BB{R}^3$ be a regular surface and \colB{let $g^t$ be the parallel
 surface of $g$ at distance $t=1/\kappa_i(p)$}, where $U$ is an open subset of
 $\BB{R}^2$ \colB{and $p\in U$}. Assume that $\Phi$, $\Phi^t$, and $\varphi$ is defined as
 in \colR{Subsection~$\ref{sec:ADSU}$.}
 \begin{enumerate}
  \vspace{-8pt}
  \setlength{\parskip}{0cm} 
  \setlength{\itemsep}{0pt} 
  \item If \colB{$g(p)$} is neither a ridge point relative to the
        principal vector ${\bf v}_i$ nor an umbilic, 
        then $\varphi$ has an $A_2$ singularity at \colB{$p$.}
        In this case, $\Phi^{t}$ is a $\mathcal{K}$-versal
        unfolding of $\varphi$.
        Moreover, 
        \colB{$g^t$} 
        is locally diffeomorphic to the cuspidal edge at \colB{$g^t(p)$}.
  \item If \colB{$g(p)$} is a first order ridge point relative to the
        principal vector ${\bf v}_i$, 
        then $\varphi$ has an $A_3$ singularity at \colB{$p$.}
        In this case, $\Phi^{t}$ is a $\mathcal{K}$-versal unfolding of
        $\varphi$ if and only if \colB{$g(p)$} is not a sub-parabolic
        point relative to the other principal vector ${\bf v}_j$.
        Moreover, 
        \colB{$g^t$} 
        is locally diffeomorphic to the swallowtail at \colB{$g^t(p)$}.
  \item If \colB{$g(p)$} is a second order ridge point relative to the
        principal vector ${\bf v}_i$, 
        then $\varphi$ has an $A_4$ singularity at \colB{$p$.}
        In this case, $\Phi$ is a $\mathcal{K}$-versal unfolding of
        $\varphi$ if and only if \colB{$p$} 
        is a 
        \colB{non-singular} point of the ridge line relative to the same
        principal \corC{vector} ${\bf v}_i$.
        Moreover, 
        \colB{$g^t$} 
        is the section of the discriminant set $\mathcal{D}(\Phi)$, which
        is locally diffeomorphic to the butterfly,
        with the hyperplane \colB{$t=1/\kappa_i(p)$}.
  \item If \colB{$g(p)$} is a hyperbolic umbilic, 
        then $\varphi$ has a $D_4^+$ singularity at \colB{$p$.}
        In this case, $\Phi$ is a $\mathcal{K}$-versal
        unfolding of $\varphi$ if and only if \colB{$g(p)$} is not a
        right-angled umbilic.
        Moreover, 
        \colB{$g^t$} 
        is the section of the discriminant set $\mathcal{D}(\Phi)$, which
        is locally diffeomorphic to the $4$-dimensional $D_4^+$ singularity, 
        with the hyperplane \colB{$t=1/\kappa_i(p)$}.
  \item If \colB{$g(p)$} is an elliptic umbilic, 
        then $\varphi$ has a $D_4^-$ singularity at \colB{$p$.}
        In this case, $\Phi$ is a $\mathcal{K}$-versal
        unfolding of $\varphi$.
        Moreover, 
        $g^t$ 
        is the section of the discriminant set $\mathcal{D}(\Phi)$, which
        is locally diffeomorphic to the $4$-dimensional $D_4^-$
        singularity, with the hyperplane \colB{$t=1/\kappa_i(p)$}.
 \end{enumerate}
\end{theorem}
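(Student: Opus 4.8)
\emph{Proof outline.}
For each of the five cases the plan is to carry out three steps in order: (a) determine the $\mathcal{K}$-singularity type of the germ $\varphi$; (b) decide whether $\Phi^{t}$ (cases $(1)$--$(2)$) or $\Phi$ (cases $(3)$--$(5)$, where a $3$-parameter unfolding can never be $\mathcal{K}$-versal of a codimension~$4$ germ) is a $\mathcal{K}$-versal unfolding of $\varphi$; and (c) read off the diffeomorphism type of the parallel surface. Step (c) is automatic once (a) and (b) are done: by the discussion in Subsection~\ref{sec:ADSU}, a $\mathcal{K}$-versal unfolding of a germ $\mathcal{K}$-equivalent to $A_2$, $A_3$, $A_4$, or $D_4^{\pm}$ has discriminant set diffeomorphic to that of the model versal unfolding listed there, so $g^t=\mathcal{D}(\Phi^{t})$ is diffeomorphic to the cuspidal edge or the swallowtail in cases $(1)$--$(2)$, and in cases $(3)$--$(5)$ the parallel surface is the section $\{t=1/\kappa_i(p)\}$ of $\mathcal{D}(\Phi)$, which is diffeomorphic to the butterfly or to the $4$-dimensional $D_4^{\pm}$ singularity.

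For step (a) I put $g$ in the Monge form \eqref{eq:monge} (cases $(1)$--$(3)$, with $k_1\ne k_2$, ${\bf v}_1=(1,0)$, $i=1$) or \eqref{eq:umbmonge} (cases $(4)$--$(5)$), take $p$ the origin so that $g(p)={\bf 0}$, ${\bf n}(p)=(0,0,1)$, $q=(0,0,t_0)$ with $t_0=1/\kappa_i(p)$, and expand
\[
 \varphi(u,v)=-\tfrac12\bigl(u^2+v^2\bigr)+t_0 f(u,v)-\tfrac12 f(u,v)^2 .
\]
Since $t_0 k_i=1$, the quadratic part collapses: in cases $(1)$--$(3)$ it reduces to $\tfrac{k_2-k_1}{2k_1}v^2$, so $\varphi$ has corank $1$ with null direction $\partial/\partial u={\bf v}_1$, and in cases $(4)$--$(5)$ it vanishes, with $3$-jet $t_0 f_3$. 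In the corank~$1$ case the splitting lemma reduces $\varphi$ to $\tfrac{k_2-k_1}{2k_1}{v'}^2+\widetilde\varphi(u)$, $v'=v-\beta(u)$, $\beta(u)=\beta_2 u^2+\beta_3 u^3+\cdots$ being obtained by solving $\partial\varphi/\partial v=0$; a short computation shows the $u^3$-coefficient of $\widetilde\varphi$ is a nonzero multiple of $a_{30}$ and, after incorporating the term produced by completing the square, its $u^4$-coefficient is a nonzero multiple of $3a_{21}^2+(a_{40}-3k_1^3)(k_1-k_2)$, so by Lemma~\ref{lem:ridge} the germ is $A_2$, $A_3$, or $A_4$ according as the origin is a non-ridge, first order ridge, or second order ridge point. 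In the corank~$2$ case $t_0 f_3$ is a cubic form with discriminant a positive multiple of $-\Gamma$, hence $\mathcal{K}$-equivalent to $u^2 v+v^3$ when $\Gamma>0$ and to $u^2 v-v^3$ when $\Gamma<0$, so $\varphi$ has a $D_4^+$ singularity at a hyperbolic umbilic and a $D_4^-$ singularity at an elliptic umbilic.

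For step (b) I use the versality criterion of Subsection~\ref{sec:ADSU}: the unfolding is $\mathcal{K}$-versal iff the restrictions to $\BB{R}^2\times\{q\}$ of its parameter-derivatives span over $\BB{R}$ the quotient $Q:=\mathcal{E}_2/\bigl(\langle\partial\varphi/\partial u,\partial\varphi/\partial v,\varphi\rangle+\mathcal{M}_2^{k+1}\bigr)$. These restrictions are $u$, $v$, $f(u,v)-t_0$ (the components of $g-q$), together with the nonzero constant $t_0$ in the case of $\Phi$. In cases $(1)$--$(2)$ one computes $[v]=\beta_2[u^2]$ in $Q$ with $\beta_2$ a nonzero multiple of $a_{21}$, so the three classes span $Q$ automatically for $A_2$, and for $A_3$ if and only if $a_{21}\ne0$, i.e.\ if and only if the origin is not a red sub-parabolic point (Lemma~\ref{lem:subpara}). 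In case $(3)$, $Q$ has basis $[1],[u],[u^2],[u^3]$, $[v]=\beta_2[u^2]+\beta_3[u^3]$, and the four classes span $Q$ iff $\beta_3\ne0$, where $\beta_3$ is a nonzero multiple of $3a_{21}a_{12}+a_{31}(k_1-k_2)$; by \eqref{eq:rline} and Lemma~\ref{lem:ridgeline} this says exactly that $p$ is a non-singular point of the ridge line. In cases $(4)$--$(5)$, $Q$ has basis $[1],[u],[v]$ and one further class represented by $u^2$ (equivalently $v^2$) coming from $\mathcal{M}_2^2/\bigl(\mathcal{M}_2^3+\BB{R}\{\partial_u f_3,\partial_v f_3\}\bigr)$; since $[f-t_0]=-t_0[1]+\tfrac k2[u^2+v^2]$ there, the four classes span $Q$ iff $u^2+v^2$ is not an $\BB{R}$-linear combination of $\partial_u f_3$ and $\partial_v f_3$, and evaluating the corresponding $3\times3$ determinant identifies this with $\Gamma'\ne0$, i.e.\ with the umbilic not being right-angled; as a right-angled umbilic is necessarily hyperbolic, $\Phi$ is unconditionally $\mathcal{K}$-versal in the elliptic case.

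I expect the main obstacle to be step (b) in cases $(3)$ and $(4)$: one must push the splitting-lemma curve $v=\beta(u)$, respectively the cubic form $f_3$, far enough to name the obstruction class in the $4$-dimensional ring $Q$ and then recognise it as exactly $3a_{21}a_{12}+a_{31}(k_1-k_2)$, respectively $\Gamma'$, so that the analytic versality condition matches the stated differential-geometric one. Everything else is bookkeeping: once versality or its failure is settled, the diffeomorphism statements follow from the general principle on discriminants of $\mathcal{K}$-versal unfoldings recalled in Subsection~\ref{sec:ADSU}, together with the identifications $\mathcal{D}(\Phi^{t})=g^t$ and $\mathcal{D}(\Phi)\cap\{t=1/\kappa_i(p)\}=g^t$.
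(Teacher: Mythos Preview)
Your proposal is correct and follows essentially the same route as the paper. The paper packages step~(a) as separate criteria (Theorems~\ref{thm:CriteriaAk} and~\ref{thm:CriteriaD4}, themselves proved by exactly the splitting-lemma substitution $v\mapsto v-\beta(u)$ you describe) and carries out step~(b) by writing explicit coefficient tables for $\varphi_u,\varphi_v,\varphi,\Phi_x,\Phi_y,\Phi_z,(\Phi_t)$ and their monomial multiples after the same change $v\mapsto v-\tfrac{c_{21}}{2\hat k_2}u^2$, then applying Gaussian elimination; the pivots that survive give precisely your obstructions $a_{21}$ in case~(2), $\hat c_{31}$ (a nonzero multiple of $3a_{21}a_{12}+a_{31}(k_1-k_2)$) in case~(3), and $\Gamma'$ in cases~(4)--(5).
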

A proof of this theorem is given in Section \ref{sec:proof}.

Again, we remark that the parallel surfaces $g^t$ of a regular surface
$g$ are the front.
Since the unit normal vector of 
$g^t$ coincides with the unit normal vector ${\bf n}$
of the initial surface $g$, the discriminant function of $g^t$ is given
by
\[
 \lambda(u,v)=\det(g^t_u(u,v),g^t_v(u,v),{\bf n}(u,v)).
\]
Moreover, the Jacobian matrix $J_{g^t}$ of $g^{t}$ is given by
\begin{equation}
 \label{eq:jacobipara}
 J_{g^{t}}=
 \colB{J_g\left(
    \left(
     \begin{array}{cc}
      1 & 0 \\
      0 & 1
     \end{array}
    \right)
    -t\left(
    \begin{array}{cc}
     E & F\\
     F & G
    \end{array}
      \right)^{-1}
    \left(
     \begin{array}{cc}
     L & M \\
     M & N
     \end{array}
    \right)
    \right),}
\end{equation}
\colB{where $J_{g}$ is the Jacobian matrix of $g$.}
Applying criteria for singularities of fronts (Theorem~\ref{thm:front1}
\colB{through} \ref{thm:front3}) to 
$g^t$, we obtain Theorem~\ref{cor:front} as corollaries of these criteria.

\begin{theorem}
 \label{cor:front}
 Let $g:U\to\BB{R}^3$ be a regular surface and \colB{let $g^t$ be}
 the parallel surface of $g$ at distance \colB{$t=1/\kappa_i(p)$}, where $U$ is an open
 subset of $\BB{R}^2$ \colB{and $p\in U$}.
 \begin{enumerate}
   \vspace{-8pt}
   \setlength{\parskip}{0cm} 
   \setlength{\itemsep}{0pt} 
  \item Suppose that $g(p)$ is a second order ridge point
        relative to the principal vector ${\bf v}_i$ which is
        not a sub-parabolic point relative to the other principal
        direction ${\bf v}_j$. 
        Then 
        $g^t$ 
        is locally diffeomorphic to the cuspidal butterfly at
        $g^t(p)$. 
  \item Suppose that $g(p)$ is a ridge point relative to the
        principal direction ${\bf v}_i$ and 
        sub-parabolic point relative to the other principal direction
        ${\bf v}_j$. 
        Then 
        $g^t$ 
        is locally diffeomorphic to the cuspidal lips
        $($resp.~cuspidal  beaks$)$ at $g^t(p)$ if
        $\det(\mathrm{Hess}_{({\bf v}_1,{\bf
        v}_2)}\kappa_i(p))>0$ $($resp.~$\det(\mathrm{Hess}_{({\bf
        v}_1,{\bf v}_2)}\kappa_i(p))<0$ and the order of ridge is
        one$)$, where $\mathrm{Hess}_{({\bf v}_1,{\bf v}_2)}\kappa_i$
        is the Hessian matrix of $\kappa_i$ with respect to ${\bf v}_1$
        and ${\bf v}_2$. 
  \item Suppose that $g(p)$ is an umbilic. 
        Then 
        $g^t$ 
        is locally diffeomorphic
        to a 3-dimensional $D_4^+$ singularity $($resp.~$D_4^-$
        singularity$)$ at $g^t(p)$ if $g(p)$ is a hyperbolic umbilic
        $($resp.~elliptic umbilic$)$.  
 \end{enumerate}
\end{theorem}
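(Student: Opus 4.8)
The plan is to compute the discriminant function $\lambda$ of the front $g^t$ explicitly in terms of the principal curvatures of $g$ and then to feed the result into the criteria of Theorems~\ref{thm:front1}--\ref{thm:front3}. Writing $S$ for the shape operator of $g$, that is, the matrix appearing in \eqref{eq:jacobipara}, and using $\det(g_u,g_v,{\bf n})=\sqrt{EG-F^2}$, the formula \eqref{eq:jacobipara} gives
\[
 \lambda(u,v)=\det(g^t_u,g^t_v,{\bf n})
 =\sqrt{EG-F^2}\,\det(I-tS)
 =\sqrt{EG-F^2}\,(1-t\kappa_1)(1-t\kappa_2).
\]
If $g(p)$ is not an umbilic and $t=1/\kappa_i(p)$, then $1-t\kappa_j$ is non-zero near $p$ and $\kappa_i$ is smooth near $p$, so $\lambda=c\cdot h$ with $h:=\kappa_i(p)-\kappa_i$ and $c$ a smooth function with $c(p)\neq0$. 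If $g(p)$ is an umbilic with common principal curvature $k$ and $t=1/k$, then $S(p)=kI$, and $\lambda=[(EG-F^2)k^2-(EN-2FM+GL)k+(LN-M^2)]/(k^2\sqrt{EG-F^2})$; by the computation in the proof of Theorem~\ref{thm:cpc_at_umbilic} (cf.~\eqref{eq:CPCLineAtUmbilic}) the bracket has vanishing $1$-jet at $p$ and $2$-jet a positive multiple of $\alpha u^2+2\beta uv+\gamma v^2$ with $\beta^2-\alpha\gamma=\Gamma/4$, hence so does $\lambda$.

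Next I would read off the geometric input of the criteria. Along $S(g^t)$ one has $\kappa_i\equiv\kappa_i(p)$, so $\ker(I-tS)$ is spanned by the $\kappa_i$-principal direction and the null vector field of $g^t$ may be taken to be ${\bf v}_i$; moreover $I-tS$ has rank $1$ at a non-umbilic singular point and vanishes at an umbilic, so $\rank(dg^t_p)$ is $1$ in the first case and $0$ in the second. Using $\lambda=c\cdot h$ and iterating the derivation ${\bf v}_i$ at $p$, where $h(p)=0$, every lower-order term produced by the Leibniz rule carries a factor ${\bf v}_i^{(l)}h(p)$ with $l<m$, whence
\[
 d\lambda(p)=-c(p)\,d\kappa_i(p),\qquad
 {\bf v}_i^{(m)}\lambda(p)=-c(p)\,{\bf v}_i^{(m)}\kappa_i(p)\ \ \text{provided}\ \ {\bf v}_i^{(l)}\kappa_i(p)=0\ (1\le l<m),
\]
and, when $d\kappa_i(p)=0$, $\mathrm{Hess}\,\lambda(p)=-c(p)\,\mathrm{Hess}\,\kappa_i(p)$, whose determinant has the same sign as $\det(\mathrm{Hess}_{({\bf v}_1,{\bf v}_2)}\kappa_i(p))$ since a change of basis multiplies it by a positive square. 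In particular $p$ is a non-degenerate singular point of $g^t$ if and only if $d\kappa_i(p)\neq0$, that is (by the definitions in Subsection~\ref{sec:Ridge}), if and only if $g(p)$ is not simultaneously a ridge point relative to ${\bf v}_i$ and a sub-parabolic point relative to ${\bf v}_j$; and at an umbilic $\det(\mathrm{Hess}\,\lambda(p))$ has the sign of $4(\alpha\gamma-\beta^2)=-\Gamma$, so it is negative at a hyperbolic umbilic and positive at an elliptic umbilic.

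Finally I would apply the three criteria. For (1), a second order ridge point relative to ${\bf v}_i$ that is not sub-parabolic relative to ${\bf v}_j$ is a non-degenerate singular point of $g^t$ with ${\bf v}_i\lambda(p)={\bf v}_i^{(2)}\lambda(p)=0$ and ${\bf v}_i^{(3)}\lambda(p)\neq0$ (from the displayed relations together with ${\bf v}_i\kappa_i(p)={\bf v}_i^{(2)}\kappa_i(p)=0$, ${\bf v}_i^{(3)}\kappa_i(p)\neq0$), so Theorem~\ref{thm:front1} gives the cuspidal butterfly. For (2), a ridge point relative to ${\bf v}_i$ that is also sub-parabolic relative to ${\bf v}_j$ satisfies $d\lambda(p)=0$ and $\rank(dg^t_p)=1$; since $\mathrm{sign}\det(\mathrm{Hess}\,\lambda(p))=\mathrm{sign}\det(\mathrm{Hess}_{({\bf v}_1,{\bf v}_2)}\kappa_i(p))$ and ${\bf v}_i^{(2)}\lambda(p)=-c(p){\bf v}_i^{(2)}\kappa_i(p)$ is non-zero exactly when the ridge has order one, Theorem~\ref{thm:front2} yields the cuspidal lips (resp.\ the cuspidal beaks). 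For (3), an umbilic is a singular point of $g^t$ with $\rank(dg^t_p)=0$, and by the sign of $\det(\mathrm{Hess}\,\lambda(p))$ determined above Theorem~\ref{thm:front3} gives the $3$-dimensional $D_4^+$ singularity at a hyperbolic umbilic and the $3$-dimensional $D_4^-$ singularity at an elliptic umbilic.

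The step demanding the most care is the bookkeeping of the second paragraph: one must check that the non-vanishing factor $c$ affects neither the vanishing orders of the successive derivatives ${\bf v}_i^{(m)}\lambda(p)$ nor the sign of $\det(\mathrm{Hess}\,\lambda(p))$, which rests on the fact that the Leibniz rule for a derivation kills all lower-order contributions once the corresponding invariants ${\bf v}_i^{(l)}h(p)$ vanish, and that the Hessian of $\kappa_i$ at a critical point is defined up to a positive change of coordinates. In the umbilic case what remains is the identification of the $2$-jet of $\det(I-tS)$ with the quadratic form whose discriminant is $\Gamma$, which is precisely \eqref{eq:CPCLineAtUmbilic} from the proof of Theorem~\ref{thm:cpc_at_umbilic}.
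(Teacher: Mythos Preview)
Your argument is correct and follows a genuinely different route from the paper. The paper works throughout in the Monge form \eqref{eq:monge} (or \eqref{eq:umbmonge} for the umbilic case), translates each geometric hypothesis into conditions on the coefficients $a_{ij}$ via Lemmas~\ref{lem:ridge} and~\ref{lem:subpara}, and then verifies by direct computation the values of $\lambda_u$, $\lambda_v$, ${\bf v}_1^{(m)}\lambda$, $J_{g^t}$, and $\det(\mathrm{Hess}\,\lambda)$ at the origin; for instance, in part~(2) it produces the explicit identity $\det(\mathrm{Hess}_{({\bf v}_1,{\bf v}_2)}\kappa_1(0,0))=\dfrac{{k_1}^4}{(k_1-k_2)^2}\det(\mathrm{Hess}\,\lambda(0,0))$, and in part~(3) it finds $\det(\mathrm{Hess}\,\lambda(0,0))=-\Gamma/{k_1}^4$. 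You instead factor $\lambda=\sqrt{EG-F^2}\,(1-t\kappa_1)(1-t\kappa_2)$ once and for all and read off everything---the null direction, the (non)degeneracy, the iterated ${\bf v}_i$-derivatives, and the sign of $\det(\mathrm{Hess}\,\lambda)$---from the single relation $\lambda=c\,(\kappa_i(p)-\kappa_i)$ with $c(p)\neq0$ and the Leibniz rule, together with the identification of the $2$-jet at an umbilic already obtained in \eqref{eq:CPCLineAtUmbilic}. Your approach is more conceptual and makes the dependence on the intrinsic invariants of $\kappa_i$ transparent without ever expanding in $a_{ij}$; the paper's approach, while more computational, has the advantage of yielding the explicit constants of proportionality and of being self-contained (it does not need to invoke the proof of Theorem~\ref{thm:cpc_at_umbilic}).
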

\begin{proof}
 (1)\quad We may assume that $p=(0,0)$ and that the initial regular
 surface $g$ given in Monge form as in \eqref{eq:monge}.
 We remark that $k_1\ne k_2$.
 Now we prove \colR{the theorem} in the case $t=1/\kappa_1(0,0)=1/k_1$.
 From Lemma~\ref{lem:ridge} and \ref{lem:subpara},
 we have
 \begin{align}
  \label{eq:2-ridge}
  \begin{split}
   a_{30}=3{a_{21}}^2+(a_{40}-3{k_1}^3)(k_1-k_2)&=0,\\
   15{a_{21}^2}a_{12}+10a_{21}a_{31}(k_1-k_2)+a_{50}(k_1-k_2)^2&\ne0,\quad\text{and}\quad
   a_{21}\ne0.
  \end{split}
 \end{align}
 Suppose that $t=1/k_1$.
 Then we have $\lambda(0,0)=0$.
 Moreover, from \eqref{eq:2-ridge}, we have \colB{$\lambda_u(0,0)=0$ and
 $\lambda_v(0,0)\ne0$.} 
 It turns out that $(0,0)$ is a non-degenerate singular point of $g^t$.
 Therefore, the set of singular points of $g^t$ is a locally smooth
 curve near $(0,0)$, which is the CPC line $\Sigma_{k_1}$, and there
 exists a null direction $\eta$ with $dg^t(\eta)=0$ along this smooth
 curve.
 It follows form  \eqref{eq:jacobipara} that the null direction $\eta$
 has the same direction as the principal vector ${\bf v}_1$.
 From \eqref{eq:2-ridge}, we have \colB{${\bf v}_1\lambda(0,0)={{\bf
 v}_1}^2\lambda(0,0)=0$ and ${{\bf v}_1}^3\lambda(0,0)\ne0$.}
 Therefore, we obtain 
 $\eta\lambda(0,0)=\eta^2\lambda(0,0)=0$, $\eta^3\lambda(0,0)\ne0$.
 If the two map germs are $\mathcal{A}$-equivalent, their images are
 locally diffeomorphic.
 \colB{By Theorem~\ref{thm:front1}, 
 $g^t$} 
 is locally diffeomorphic to the cuspidal butterfly \colB{at $g^t(p)$}.

 (2)\quad We may assume that $p=(0,0)$ and that the initial regular
 surface $g$ given in Monge form as in \eqref{eq:monge}.
 We remark that $k_1\ne k_2$.
 Now we prove \colR{the theorem} in the case $t=1/\kappa_1(0,0)=1/k_1$.
 From Lemma~\ref{lem:ridge} and \ref{lem:subpara}, we have 
 \begin{equation}
  \label{eq:ridge_sub-para}
  a_{30}=a_{21}=0.
 \end{equation}
 Suppose that $t=1/k_1$.
 Then we have $\lambda(0,0)=0$ and
 \[
   J_{g^t}(0,0)=
   \begin{pmatrix}
    0 & 0 \\
    0 & (k_1-k_2)/{k_1}\\
    0 & 0\\
   \end{pmatrix}.
 \]
 Moreover, from \eqref{eq:ridge_sub-para}, we have
 $\lambda_u(0,0)=\lambda_v(0,0)=0$. 
 It follows that $(0,0)$ is a degenerate singular point of $g^t$ with
 $\rank(dg^t_p)=1$. 
 Using \eqref{eq:ridge_sub-para}, we obtain 
\colB{
 \begin{align}
 \label{eq:hesskappa}
  \det(\text{Hess}_{({\bf v}_1,{\bf v}_2)}\kappa_1(0,0))
  &=
  \begin{array}{|cc|}
   a_{40}-3{k_1}^3 & a_{31} \\
   a_{31} & \dfrac{2{a_{12}}^2+(a_{22}-k_1{k_2}^2)}{k_1-k_2}
  \end{array}\\\nonumber
  &=\frac{{k_1}^4}{(k_1-k_2)^2}\det(\text{Hess}\lambda(0,0)).
 \end{align}}
 Therefore, the sign of $\det(\text{Hess}\lambda(0,0))$ 
 is the same as \colR{that} 
 of $\det(\text{Hess}_{({\bf v}_1,{\bf v}_2)}\kappa_1(0,0))$.
 Besides, since $\rank(dg^t_p)=1$, there exists a
 non-zero vector $\eta$ with $dg^t_p(\eta)=0$.
 From \eqref{eq:jacobipara}, the non-zero vector $\eta$ has the same
 direction as the principal vector ${\bf v}_1$. 
 \colB{Using \eqref{eq:ridge_sub-para}, we conclude that $(0,0)$ is a
 first order blue ridge point relative to ${\bf v}_1$ if and only if
 ${{\bf v}_1}^2\lambda(0,0)\ne0$, that is, ${\eta}^2\lambda(0,0)\ne0$.} 
 Applying Theorem \ref{thm:front2} to the argument indicated above, we
 obtain 
 (2).

 (3)\quad  We may assume that $p=(0,0)$ and that the initial regular
 surface $g$ given in Monge form as in 
 \eqref{eq:umbmonge}.
 We remark that $\kappa_1(0,0)=\kappa_2(0,0)=k$.
 Suppose that $t=1/k$.
 Then we have \colB{$\lambda(0,0)=\lambda_u(0,0)=\lambda_v(0,0)=0$ and
 $\rank(J_{g^t}(0,0))=0$.}  
 Hence, $(0,0)$ is a degenerate singular point
 of $g^t$ with $\rank(dg^t_p)=0$. 
 Moreover, we have
 \colB{$\det(\mathrm{Hess}\lambda(0,0))=-\Gamma/{k_1}^4$}, where
 $\Gamma$ is as in 
 \colB{Subsection}~\ref{sec:umbilics}.
 It follows
 that $\det(\mathrm{Hess}\lambda(0,0))<0$
 (resp.~$\det(\mathrm{Hess}\lambda(0,0))>0$) if and only if $g(0,0)$ is
 a hyperbolic \colB{(resp.~elliptic)} umbilic 
 Therefore, using Theorem~\ref{thm:front3}, we obtain (3).
\end{proof}

\begin{remark}
 \corC{Suppose that $g(p)$ is} a ridge point relative to the principal direction
 ${\bf v}_i$ and 
 sub-parabolic point relative to the other principal
 direction ${\bf v}_j$.
 It follow from \eqref{eq:rline}, \eqref{eq:spline} and
 \eqref{eq:hesskappa} that
 $\det(\mathrm{Hess}_{({\bf v}_1,{\bf v}_2)}\kappa_i(p))=0$ if and only if
 the ridge line relative to ${\bf v}_i$ and the sub-parabolic line
 relative to ${\bf v}_j$ are tangent at $p$.
\end{remark}

These theorems imply that the configuration of CPC lines, ridge lines,
and sub-parabolic lines determines types of singularities of parallel
surfaces.
For example, it follows from Theorem~\ref{thm:main1}~(1) and
\colB{Lemma~\ref{lem:CPCline}} 
that if the CPC line \colB{$\Sigma_{\kappa_i(p)}$} 
does not meet the ridge line relative ${\bf v}_i$ at \colB{$p$} then the
parallel surface $g^t$ at distance \colB{$t=1/\kappa_i(p)$} is the
cuspidal edge at \colB{$g^t(p)$}. 
Moreover, it follows from Theorem~\ref{thm:main1}~(2) and
\colB{Lemma~\ref{lem:CPCline}} 
that if CPC line \colB{$\Sigma_{\kappa_i(p)}$} 
crosses the ridge line relative to the principal vector ${\bf v}_i$ and
does not cross the sub-parabolic line relative to the other principal
vector ${\bf v}_j$ at \colB{$p$} 
then the parallel surface $g^t$ at distance \colB{$t=1/\kappa_i(p)$} is
the swallowtail at \colB{$g^t(p)$}. 
Therefore, Figure~\ref{fig:CPCLineAtUmbilic}~(i) and (ii) show
that there are three swallowtails near \colB{$g^t(p)$} on the parallel
surface $g^t$ at distance \colB{$t=1/(\kappa_i(p)\pm\varepsilon)$} if
\colB{$g(p)$} is an elliptic umbilic. 
Similarly, Figure~\ref{fig:CPCLineAtUmbilic}~(iii) \colR{through} (v)
show that there is one swallowtail near \colB{$g^t(p)$} on the parallel
surface $g^t$ at distance \colB{$t=1/(\kappa_i(p)\pm\varepsilon)$} if
\colB{$g(p)$} is a hyperbolic umbilic which is 
\colB{not} right-angled. 
These bifurcations of parallel surfaces near umbilics are depicted in
Figure~\ref{fig:bifurcationD4}.
These are also shown in \cite[\corR{p.~384}]
{A1}.
\begin{figure}[htbp] 
 \begin{center}
  \includegraphics[width=\textwidth]{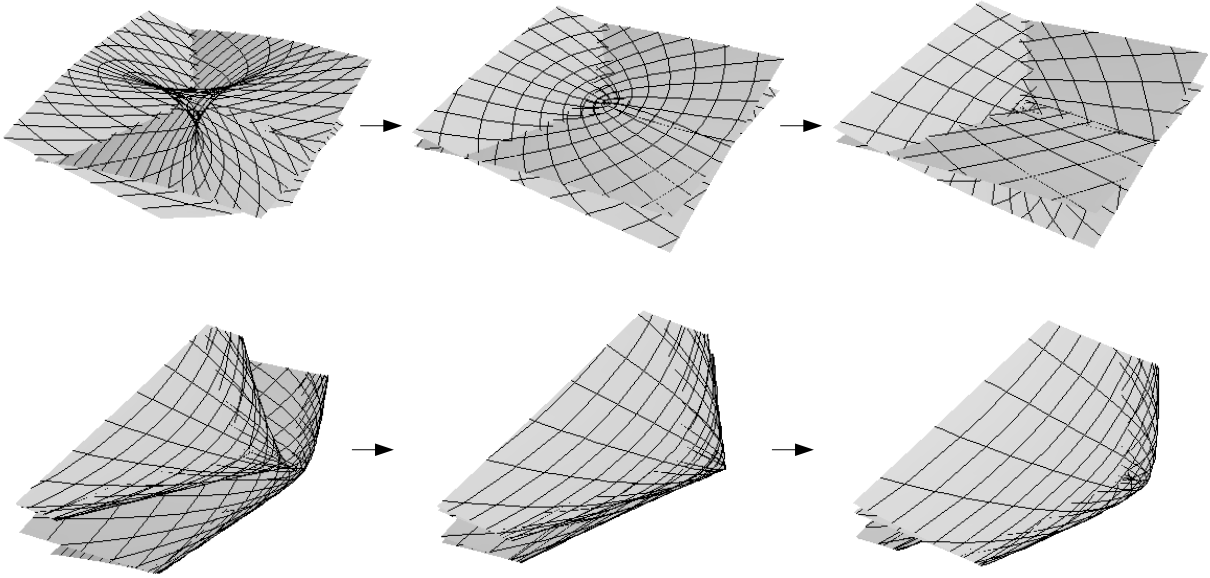}
 \end{center}
 \caption{From top to bottom: Elliptic umbilic, Hyperbolic umbilic.}
 \label{fig:bifurcationD4}
\end{figure}
 
\section{Criteria for $A_1$, $A_2$, $A_3$, $A_4$ and  $D_4^\pm$ singularities}
Before we present proof of Theorem~\ref{thm:main1}, we shall provide a convenient criteria
for $A_{\leq 4}$ and $D_{4}$ singularities in this section.

We consider the function $f:(\BB{R}^2,0)\to(\BB{R},0)$ 
whose Taylor expansion at $(0,0)$ is 
\[
f(u,v)=\sum_{i,j}\frac1{i!j!}c_{ij}u^iv^j.
\]

\subsection{Criteria for $A_k$-singularities ($k\leq 4$)}
We assume that $f$ is singular at $(0,0)$ (i.e., $c_{10}=c_{01}=0$).
It is \corR{well known} that the function $f$ has an $A_1$-singularity at
$(0,0)$ if and only if
$$
\begin{pmatrix}
 c_{20}&c_{11}\\
 c_{11}&c_{02}
\end{pmatrix}
$$
is of full rank.
Now we set 
\begin{align*}
 c_{n}(u,v):=\sum_{i+j=n}\frac1{i!j!}c_{ij}u^iv^j.
\end{align*}
It is easy to see that the following conditions are equivalent.
\begin{enumerate}
 \vspace{-8pt}
 \setlength{\parskip}{0cm} 
 \setlength{\itemsep}{0pt} 
 \item  The matrix
        $\begin{pmatrix}
          c_{20}&c_{11}\\
          c_{11}&c_{02}
         \end{pmatrix}$
        is of rank 1.
        
 \item  There 
        \corC{exists} a non-zero vector $(\lambda,\mu)$ \corC{such that}
        $\begin{pmatrix}
          c_{20} & c_{11}\\
          c_{11} & c_{02}
         \end{pmatrix}
        \begin{pmatrix}
         \lambda \\ \mu
        \end{pmatrix}=
        \begin{pmatrix}
         0 \\ 0
        \end{pmatrix}.$
        
 \item There 
        \corC{exist} a non-zero vector $(\lambda,\mu)$ and non-zero real
        number $s$ \corC{such that} 
        \begin{align}
         \label{eq:MatrixA2}
         \begin{pmatrix}
          c_{20} & c_{11}\\
          c_{11} & c_{02}
         \end{pmatrix}=
         s\begin{pmatrix}
           \mu^2 & -\lambda \mu \\
           -\lambda \mu & \lambda^2
          \end{pmatrix}.
        \end{align}
\end{enumerate}
The rank of the Hesse's matrix of $f$ is 1 if and only if one of these
conditions holds.
Under this assumption, we have the followings.
\begin{theorem}
 \label{thm:CriteriaAk}
 \begin{enumerate}
  \vspace{-8pt}
  \setlength{\parskip}{0cm} 
  \setlength{\itemsep}{0pt} 
 \item  The function $f$ is $A_2$-singularity at $(0,0)$ if and only
        if $c_3(\lambda,\mu)\ne0$.
        
  \item  The function $f$ is $A_3$-singularity at $(0,0)$ if and
        only if $c_3(\lambda,\mu)=0$, 
        \[
        \hat{c}_4(\lambda,\mu)
        :=c_4(\lambda,\mu)
        +\frac1{8s}
        \begin{vmatrix}
         \mu^2 & -\lambda\mu & \lambda^2\\
         c_{30} & c_{21} & c_{12}\\
         c_{21} & c_{12} & c_{03}
        \end{vmatrix}\ne0.
        \]
        
  \item  The function $f$ is $A_4$-singularity at $(0,0)$ if and only if 
        $c_3(\lambda,\mu)=\hat{c}_4(\lambda,\mu)=0$ and one of the following
        conditions holds.
        \begin{enumerate}
         \vspace{-8pt}
         \item  $\lambda\ne0$,
                $c_5(\lambda,\mu)
                -\dfrac1{s\lambda^2}c_{4v}(\lambda,\mu)c_{3v}(\lambda,\mu)
                +\dfrac1{2s^2\lambda^4}c_{3v}(\lambda,\mu)^2c_{3vv}(\lambda,\mu)$,
         \item $\mu\ne0$,
                $c_5(\lambda,\mu)
                -\dfrac1{s\mu^2}c_{4u}(\lambda,\mu)c_{3u}(\lambda,\mu)
                +\dfrac1{2s^2\mu^4}c_{3u}(\lambda,\mu)^2c_{3uu}(\lambda,\mu)$.
        \end{enumerate}
 \end{enumerate}
 
 Here, $(\lambda,\mu)$ is a non-zero vector and $s$ is a non-zero real
 number that satisfy \eqref{eq:MatrixA2}.
\end{theorem}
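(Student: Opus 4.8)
The plan is to prove Theorem~\ref{thm:CriteriaAk} by the splitting lemma: reduce $f$, up to a local diffeomorphism, to the sum of a nondegenerate quadratic form in one variable and a function $h$ of the other variable, and then read off the $A_k$-type from the Taylor coefficients of $h$. First I would use condition~(3): the quadratic part of $f$ is $\tfrac{s}{2}(\mu u-\lambda v)^2$, so the Hessian is nondegenerate precisely in the direction transverse to the kernel line spanned by $(\lambda,\mu)$. Assume first $\lambda\neq0$ (the case $\mu\neq0$ is symmetric and yields part~(3)(b)). Introduce linear coordinates $(X,Y)$ by $u=\lambda X$, $v=\mu X-Y/\lambda$, so that $\partial_X=\lambda\partial_u+\mu\partial_v$ is the derivative along $(\lambda,\mu)$ and $\partial_Y=-\lambda^{-1}\partial_v$; then writing $f=\sum\frac{1}{i!j!}\tilde c_{ij}X^iY^j$ we have $\tilde c_{20}=\tilde c_{11}=0$, $\tilde c_{02}=s$, and $\tilde c_{n0}=n!\,c_n(\lambda,\mu)$. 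The splitting lemma now provides a local diffeomorphism carrying $f$ to $\tfrac{s}{2}Y^2+h(X)$ with $h$ having zero $2$-jet, and since each $A_k$ is $(k+1)$-determined, $f$ has an $A_k$-singularity at the origin if and only if $h^{(m)}(0)=0$ for $1\le m\le k$ and $h^{(k+1)}(0)\neq0$.

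Next I would compute the $5$-jet of $h$. Since $\tilde c_{02}=s\neq0$, the implicit function theorem solves $\partial_Y f=0$ uniquely for $Y=\psi(X)$, with $\psi(X)=-\tfrac{\tilde c_{21}}{2s}X^2+\bigl(\tfrac{\tilde c_{12}\tilde c_{21}}{2s^2}-\tfrac{\tilde c_{31}}{6s}\bigr)X^3+\cdots$, and $h(X)=f(X,\psi(X))$; using $h'(X)=f_X(X,\psi(X))$ and expanding through order $X^4$ gives
\[
 h^{(3)}(0)=\tilde c_{30},\qquad
 h^{(4)}(0)=\tilde c_{40}-\frac{3\tilde c_{21}^2}{s},\qquad
 h^{(5)}(0)=\tilde c_{50}-\frac{10\,\tilde c_{21}\tilde c_{31}}{s}+\frac{15\,\tilde c_{12}\tilde c_{21}^2}{s^2}.
\]
Substituting the relations $\tilde c_{30}=6c_3(\lambda,\mu)$, $\tilde c_{40}=24c_4(\lambda,\mu)$, $\tilde c_{50}=120c_5(\lambda,\mu)$, $\tilde c_{21}=-2c_{3v}(\lambda,\mu)/\lambda$, $\tilde c_{31}=-6c_{4v}(\lambda,\mu)/\lambda$, $\tilde c_{12}=c_{3vv}(\lambda,\mu)/\lambda^2$ (all read off from the chosen change of coordinates) turns these, up to nonzero numerical factors, into $c_3(\lambda,\mu)$, then $c_4(\lambda,\mu)-c_{3v}(\lambda,\mu)^2/(2s\lambda^2)$, and finally $c_5(\lambda,\mu)-c_{4v}(\lambda,\mu)c_{3v}(\lambda,\mu)/(s\lambda^2)+c_{3v}(\lambda,\mu)^2c_{3vv}(\lambda,\mu)/(2s^2\lambda^4)$, which are exactly the quantities in parts~(1) and~(3)(a). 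To match part~(2) I would rewrite $-c_{3v}(\lambda,\mu)^2/(2s\lambda^2)$ as $\tfrac{1}{8s}$ times the $3\times3$ determinant in the statement; this is an identity valid on the locus $c_3(\lambda,\mu)=0$ and follows from Euler's relation $\lambda c_{3u}(\lambda,\mu)+\mu c_{3v}(\lambda,\mu)=3c_3(\lambda,\mu)$, which also shows the determinant is symmetric in $(\lambda,\mu)$ and so agrees with the $\mu\neq0$ normalization. Finally, part~(3)(b) follows by repeating the computation with the substitution $u=\lambda X+Y/\mu$, $v=\mu X$.

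The bulk of the work is the order-by-order expansion producing $h^{(5)}(0)$ together with the back-substitution of the $\tilde c_{ij}$; I expect the one genuinely non-mechanical point to be verifying the determinantal identity in part~(2) from Euler's relation. Apart from that, the argument rests only on the splitting lemma and finite determinacy of the $A_k$-singularities, both standard (see, e.g., \cite{b3,Wall}).
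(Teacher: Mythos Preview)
Your argument is correct and follows essentially the same route as the paper. Both proofs reduce $f$ along the kernel direction $(\lambda,\mu)$ to a one-variable function and read off the $A_k$-type from its low-order Taylor coefficients: you package this reduction as the splitting lemma (solving $f_Y=0$ via the implicit function theorem and setting $h(X)=f(X,\psi(X))$), whereas the paper performs the same reduction by the explicit finite substitution $v\mapsto v+(\mu/\lambda)u-cu^{2}$ with $c=c_{3v}(\lambda,\mu)/(s\lambda^{4})$ and then reads the $u^{3}$, $u^{4}$, $u^{5}$ coefficients directly. The resulting expressions coincide term by term, and the determinantal rewriting in part~(2) is handled identically via the identity $\lambda^{2}D+4c_{3v}(\lambda,\mu)^{2}=6c_{3vv}(\lambda,\mu)c_{3}(\lambda,\mu)$, which, as you note, is an immediate consequence of Euler's relation (indeed $D=\det\mathrm{Hess}(c_3)(\lambda,\mu)$).
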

\begin{proof}
 (1)\quad If $\lambda\ne0$, the coefficient of $u^2$, $v^2$, and $u^3$ in
 $f(u,v+(\mu/\lambda)u)$ are 0, 
 \corR{$s\lambda^2/2$}, and 
 \colR{$c_3(\lambda,\mu)/\lambda^3$,} respectively.
 Hence, we obtain the result.
 The case that $\mu\ne0$ is similar.
 
 (2)\quad We assume that $c_3(\lambda,\mu)=0$.
 Suppose that $\lambda\ne0$. 
 Setting $c=
 \corR{c_{3v}(\lambda,\mu)/(s\lambda^4)}$,
 we obtain that 
 the coefficients of $v^2$, $u^2v$, and $u^4$ in 
 $f(u,v+(\mu/\lambda)u-cu^2)$
 are 
 \corR{$s\lambda^2/2$}, $0$, and 
 \begin{align}
  \label{u4}
  \frac1{\lambda^4}\left(c_4(\lambda,\mu)-\frac1{2s\lambda^2}c_{3v}(\lambda,\mu)^2\right),
 \end{align}
 respectively. 
 Since 
 $$
 \lambda^2\begin{vmatrix}
       \lambda^2&-\lambda\mu&\mu^2\\
       c_{30}&c_{21}&c_{12}\\
       c_{21}&c_{12}&c_{03}
      \end{vmatrix}
 +4c_{3v}(\lambda,\mu)^2
 =6c_{3vv}(\lambda,\mu)c_3(\lambda,\mu),
 $$
 $\hat{c}_4(\lambda,\mu)\ne0$ implies that \eqref{u4} is not zero.
 The case that $\mu\ne0$ is similar.
 
 (3)\quad
 We keep the notation above and assume 
 $c_3(\lambda,\mu)=\hat{c}_4(\lambda,\mu)=0$. 
 We shall consider case (a).
 (Case (b) is similar and we omit the detail.)
 If $\lambda\ne0$, the coefficients of $v^2$, $u^2v$, $u^4$, and $u^5$ in 
 $f(u,v+(\mu/\lambda)u-cu^2)$
 are 
 \corR{$s\lambda^2/2$}, $0$, $0$, and
  \begin{align*}
  \frac1{\lambda^5}\left(c_5(\lambda,\mu)
  -\frac1{s\lambda^2}c_{4v}(\lambda,\mu)c_{3v}(\lambda,\mu)
  +\frac1{2s^2\lambda^4}c_{3v}(\lambda,\mu)^2c_{3vv}(\lambda,\mu)\right),
 \end{align*}
 respectively. \corC{The case that $\mu\ne$ is similar.}\\
 \end{proof}

\subsection{Criterion for $D_4^\pm$-singularity}
We assume that $c_{10}=c_{01}=c_{20}=c_{11}=c_{02}=0$.
Then $f$ is at least $D_4$-singularity at $(0,0)$.
We have the following.
\begin{theorem}
\label{thm:CriteriaD4}
 The function $f$ is $D_4^+$-singularity \corR{$($}resp.~$D_4^-$-singularity\corR{$)$} at
 $(0,0)$ 
 \corC{if and only if}
\begin{align}
 \label{eq:disc_D4}
  \begin{vmatrix}
  c_{30}&2c_{21}&c_{12}&0\\
  0&c_{30}&2c_{21}&c_{12}\\
  c_{21}&2c_{12}&c_{03}&0\\
  0&c_{21}&2c_{12}&c_{03}
 \end{vmatrix}
\end{align}
 takes positive values \corR{$($}resp.~negative values\corR{$)$}.
\end{theorem}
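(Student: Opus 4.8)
The plan is to reduce the question to the binary cubic form $c_3(u,v):=\sum_{i+j=3}\frac1{i!j!}c_{ij}u^iv^j$, which is the $3$-jet of $f$ under the standing hypothesis $c_{10}=c_{01}=c_{20}=c_{11}=c_{02}=0$, and then to recognise the determinant in \eqref{eq:disc_D4} as a relative invariant of $c_3$ whose sign detects the real type. Since all terms of $f$ of order $\ge4$ lie in $\mathcal{M}_2^4$ and $D_4$ is $3$-determined with respect to right equivalence, the finite determinacy theorem (cf.~\cite{Wall}) gives: if $c_3$ is \emph{nondegenerate} (three pairwise distinct roots in $\BB{CP}^1$) then $f$ is right-equivalent to $c_3$, so $f$ is of type $D_4^{+}$ or $D_4^{-}$; while if $c_3$ has a repeated linear factor then $f$ is strictly more degenerate and is neither $D_4^{+}$ nor $D_4^{-}$. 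So it remains to decide, for nondegenerate $c_3$, which real type occurs, in terms of the $c_{ij}$.

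Next I would use that a nondegenerate real binary cubic form has either three real roots or exactly one real root (plus a complex-conjugate pair), and that $GL_2(\BB R)$ — including the scalar matrices $\lambda I$, which multiply a cubic form by $\lambda^{3}$ and hence absorb all nonzero rescalings — acts transitively on each of these two classes (three-transitivity of $PGL_2(\BB R)$ on $\BB{RP}^1$ for the first; normalising the real linear factor and then the residual definite quadratic factor for the second). Hence $c_3$ is linearly equivalent to $u(u-v)(u+v)=u^3-uv^2$ in the first case and to $u(u^2+v^2)=u^3+uv^2$ in the second. In the standard convention — the one used in this paper, under which an elliptic umbilic, whose cubic $f_3$ has three real roots, carries a $D_4^{-}$ singularity — these are exactly the normal forms of $D_4^{-}$ and $D_4^{+}$. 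Therefore $f$ is $D_4^{-}$ (resp.\ $D_4^{+}$) if and only if $c_3$ has three real roots (resp.\ one real root).

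Finally I would identify \eqref{eq:disc_D4}. One has $\partial c_3/\partial u=\tfrac12(c_{30}u^2+2c_{21}uv+c_{12}v^2)$ and $\partial c_3/\partial v=\tfrac12(c_{21}u^2+2c_{12}uv+c_{03}v^2)$, so the determinant in \eqref{eq:disc_D4} is precisely the Sylvester resultant of the binary quadratics $c_{30}u^2+2c_{21}uv+c_{12}v^2$ and $c_{21}u^2+2c_{12}uv+c_{03}v^2$, hence a fixed positive multiple of $\mathrm{Res}(\partial_uc_3,\partial_vc_3)$. This resultant vanishes exactly when $\nabla c_3$ has a nontrivial zero, i.e.\ exactly when $c_3$ has a repeated factor, and under linear changes of coordinates and rescalings of $c_3$ it transforms by a strictly positive factor; thus its sign is constant on each of the two orbits above. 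Evaluating \eqref{eq:disc_D4} on the two normal forms gives a positive value at $u^3+uv^2$ and a negative value at $u^3-uv^2$ (equivalently, \eqref{eq:disc_D4} is a negative constant multiple of the classical discriminant of $c_3$, which is positive precisely when $c_3$ has three distinct real roots). Combining the three steps: \eqref{eq:disc_D4}$>0\iff c_3$ has one real root $\iff f$ is $D_4^{+}$; \eqref{eq:disc_D4}$<0\iff f$ is $D_4^{-}$; and \eqref{eq:disc_D4}$=0\iff f$ is neither.

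The determinant and resultant bookkeeping is routine. The substantive points are the first step — the appeal to finite determinacy, including the fact that a degenerate cubic part forces $f$ out of the $D_4$ class, which is what makes the criterion biconditional — and pinning down the $\pm$ convention so that a positive value of \eqref{eq:disc_D4} corresponds to the paper's $D_4^{+}$ (the one-real-root / hyperbolic-umbilic case) rather than its $D_4^{-}$.
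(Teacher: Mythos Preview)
Your proof is correct and reaches the same conclusion as the paper, but the routes differ in emphasis. The paper's argument is a two-line computation: it asserts that $f$ has type $D_4^{+}$ (resp.\ $D_4^{-}$) exactly when the cubic part $c_3$ has one real root (resp.\ three), then expands the determinant \eqref{eq:disc_D4} directly to obtain $-48\Delta$, where $\Delta$ is the classical discriminant of $c_3$. Your approach is more structural: you justify the reduction to $c_3$ via $3$-determinacy (which the paper leaves implicit), identify \eqref{eq:disc_D4} as the Sylvester resultant $\mathrm{Res}(2\partial_u c_3,\,2\partial_v c_3)$, and use $GL_2(\BB R)$-orbit analysis plus evaluation on normal forms rather than explicit expansion. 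Your version buys you a cleaner treatment of the degenerate case (the determinant vanishes iff $c_3$ has a repeated factor iff $f$ is not $D_4^{\pm}$), making the biconditional genuinely complete; the paper's direct expansion is faster but takes the singularity-theoretic input for granted.
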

\begin{proof}
 The function $f$ is $D_4^+$-singularity \corC{or $D_4^-$-singularity}
 at $(0,0)$ if the cubic part $c_3$ of $f$ has one real root 
 \corC{or} three real roots, 
 \corC{respectively.}
 The discriminant $\Delta$ of $c_3$ is given by
 \[
  \Delta=-\frac1{48}({a_{30}}^2{a_{03}}^2-6a_{03}a_{21}a_{12}a_{30}+4a_{30}{a_{12}}^3+4{a_{21}}^3a_{03}-3{a_{21}}^2{a_{12}}^2).
 \]
\corC{Expanding \eqref{eq:disc_D4}, we have
 \[
  \begin{vmatrix}
  c_{30}&2c_{21}&c_{12}&0\\
  0&c_{30}&2c_{21}&c_{12}\\
  c_{21}&2c_{12}&c_{03}&0\\
  0&c_{21}&2c_{12}&c_{03}
 \end{vmatrix}=-48\Delta,
 \] and we complete the proof.}
 \end{proof}

\section{Singularities of $\varphi$ and $\mathcal K$-versality}
\label{sec:proof}
\colB{In this section we give the proof of Theorem~\ref{thm:main1}.}
Let $g$ be given in 
Monge from as (\ref{eq:monge}).
If we write down $\Phi$ as 
\[
  \Phi=c_{00}+xu+yv+\frac{1}{2}(\hat{k}_{1}u^{2}+\hat{k}_{2}v^{2})+\sum_{i+j\ge3}
  \frac{1}{i!j!}c_{ij}u^{i}v^{j}, 
\]then we obtain that 
\begin{align*}
 c_{00}&=\frac{t^{2}-x^{2}-y^{2}-z^{2}}{2},\quad
 \hat{k}_{i}=k_{i}z-1~~(i=1,2),\quad c_{ij}=a_{ij}z~~(i+j=3),\\
 c_{40}&=a_{40}z-3{k_1}^2,\quad c_{31}=a_{31}z,\quad c_{22}=a_{22}z-k_1k_2,\quad c_{13}=a_{13}z,\\
 c_{04}&=a_{04}z-3{k_2}^2,\quad
 c_{50}=a_{50}z-10k_1a_{30},\quad
 c_{05}=a_{05}z-10k_2a_{03}.
\end{align*}
We recall that
we take 
\colB{points $p\in U$, and $q=(x_{0},y_{0},z_{0})\in\BB{R}^3$ 
or $q=(x_{0},y_{0},z_{0},t_{0})\in\BB{R}^4$} where
\[
 (x_{0},y_{0},z_{0})=
 \colB{g(p)+t_{0}{\bf n}(p), 
 \colR{\qquad t_{0}=\frac{1}{\kappa_{i}(p)},}}
\]
and that we set $\varphi(u,v)=\Phi(u,v,\colB{q})$ or
$\varphi(u,v)=\Phi^{t}(u,v,\colB{q})$. 
Now we assume that 
\colB{$p=(0,0)$}.
So we have 
\colB{$(x_0,y_0,z_0)=(0,0,1/k_i)$ and $t_0=1/k_i$.}
We note that $\Phi$ (resp.~$\Phi^t$) is a $\mathcal{K}$-versal unfolding
of $\varphi$ if and only if
\begin{align*}
 &\mathcal{E}_{2}=
 \langle\varphi,\varphi_{u},\varphi_{v}\rangle_{\mathcal{E}_{2}}
 +\langle
 \Phi_{x}|_{\BB{R}^{2}\times \colB{q}},
 \Phi_{y}|_{\BB{R}^{2}\times \colB{q}},
 \Phi_{z}|_{\BB{R}^{2}\times \colB{q}},
 \Phi_{t}|_{\BB{R}^{2}\times \colB{q}}
 \rangle_{\BB{R}}
 +\langle u,v\rangle^{k+1} \\
 &(\corC{\text{resp.~}}\mathcal{E}_{2}=
 \langle\varphi,\varphi_{u},\varphi_{v}\rangle_{\mathcal{E}_{2}}
 +\langle
 \Phi^{t}_{x}|_{\BB{R}^{2}\times \colB{q}},
 \Phi^{t}_{y}|_{\BB{R}^{2}\times \colB{q}},
 \Phi^{t}_{z}|_{\BB{R}^{2}\times \colB{q}}
 \rangle_{\BB{R}}
 +\langle u,v\rangle^{k+1}) 
\end{align*}
when $\varphi$ is $k$-determined.
\colB{To show $\mathcal{K}$-versality of $\Phi$ and $\Phi^t$, it is
enough to check these conditioins.
We skip the proofs of (1) and (2), since the proofs are similar to that
of (3). 
The proof of (5) is also omitted, since it is completely parallel to
that of (4).} 

\begin{proof}
 [Proof of Theorem \corR{$\ref{thm:main1}$ $(3)$}]
 From Theorem \ref{thm:CriteriaAk} (3), $\varphi$ is
 $\mathcal{K}$-equivalent to $A_4$ at $(0,0)$ if and only
 if one of the following conditions holds\corC{:}
 \begin{itemize}
 \item[(a)] $\hat{k}_1=0$, $\hat{k}_2\ne0$, 
            $c_{30}=0$, 
            $\hat{k}_2c_{40}-3{c_{21}}^2=0$, 
            ${\hat{k}_2}^{2}c_{50}-10\hat{k}_2c_{21}c_{31}+15{c_{21}}^{2}c_{12}\ne0$\corC{;}
 \item[(b)] $\hat{k}_1\ne0$, $\hat{k}_2=0$, 
            $c_{03}=0$, $\hat{k}_1c_{04}-3{c_{12}}^2=0$,
            ${\hat{k}_1}^{2}c_{05}-10\hat{k}_1c_{12}c_{13}+15c_{21}{c_{12}}^{2}\ne0$.
 \end{itemize}
 We work on Case (a).
 (Case (b) is similar and we omit the detail.) 
 \colB{Condition (a) is} 
 equivalent to 
\begin{align*}
   &z_0=1/k_1,\quad k_1\ne k_2,\quad a_{30}=0,\quad
  3{a_{21}^{2}}+(a_{40}-3{k_1}^{3})(k_1-k_2)=0,\\
  &15{a_{21}}^{2}a_{12}+10a_{21}a_{31}(k_1-k_2)^{2}+a_{50}(k_1-k_2)^{2}\ne 0,
\end{align*}
 in the original coefficients of the Monge form.
 By Lemma \ref{lem:ridge}, we obtain the first assertion.

 \colB{Let us prove $\mathcal{K}$-versality of $\Phi$.}
 \colB{We assume that $\varphi$ has an $A_4$-singularity at $(0,0)$.}
 We next remark that $A_4$-singularity is 5-determined. 
 To show $\mathcal K$-versality of $\Phi$, it is enough to verify that
 \begin{equation}
  \label{eq:VerA4}
  \mathcal E_2=
  \langle\varphi_u,\varphi_v,\varphi\rangle_{\mathcal E_2}
  +
  \langle
  {\Phi}_x |_{\BB{R}^2\times \colB{q}},
  {\Phi}_y|_{\BB{R}^2\times \colB{q}},
  {\Phi}_z|_{\BB{R}^2\times \colB{q}},
  {\Phi}_t|_{\BB{R}^2\times \colB{q}}
  \rangle_{\BB{R}}
  \colR{+\langle u,v\rangle^6.}
 \end{equation}
 Setting $c=c_{21}/(2\hat{k}_2)$ and replacing $v$ by $v-cu^{2}$, 
 we see that the coefficients of $u^{i}v^{j}$ of functions appearing in
 (\ref{eq:VerA4}) are given by the following table:
\begin{center}
 \hspace*{-0pt}\begin{tabular}{c|c|cc|ccc|cccc|c|c}
   \hline
   &1&$u$&$v$&$u^2$&$uv$&$v^2$&$u^3$&$u^2v$&$uv^2$&$v^3$&$u^4$&$u^5$\\
   \hline
   $\Phi_x$&0&\fbox{1}&0&0&0&0&0&0&0&0&0&0\\
   $\Phi_y$&0&0&\fbox{1}&$-c$&0&0&0&0&0&0&0&0\\
   $\Phi_z$&$-z_0$&0&0&\fbox{$\frac12k_1$}&0&$\frac12k_2$&
   0&$\ast$&$\ast$&$\ast$&$\ast$&$\ast$\\
   $\Phi_t$&\fbox{$t_0$}&0&0&0&0&0&0&0&0&0&0&0\\
   \hline
   $\varphi_u$&0&0&0&0&0&$\frac12c_{12}$&
   0&$\frac12\hat{c}_{31}$&$\frac12\hat{c}_{22}$&$\frac16c_{13}$&
   \fbox{$\frac1{24}\hat{c}_{50}$}&$\ast$\\
   $\varphi_v$&0&0&$\hat{k}_2$&0&$c_{12}$&$\frac12c_{03}$&
   \fbox{$\frac16\hat{c}_{31}$}&$\frac12\hat{c}_{22}$&$\frac12c_{13}$&0&
   $\frac1{24}\hat{c}_{41}$&$\ast$\\
   \hline
   $\varphi$& 0&0&0&0&0&$\frac12\hat{k}_2$&
   0&0&$\frac12c_{12}$&$\frac16c_{03}$&0&$\frac1{120}\hat{c}_{50}$
   \\
   $u\varphi_u$&0&0&0&0&0&0&0&0&$\frac12c_{12}$&0&0&\fbox{$\frac1{24}\hat{c}_{50}$}
   \\
   $v\varphi_u$&0&0&0&0&0&0&0&0&0&$\frac12c_{12}$&0&0\\
   $u\varphi_v$&0&0&0&0&\fbox{$\hat{k}_2$}&0&0&$c_{12}$&$\frac12c_{03}$
   &0&$\frac16\hat{c}_{31}$&$\frac1{24}\hat{c}_{41}$\\
   $v\varphi_v$&0&0&0&0&0&\fbox{$\hat{k}_2$}&0&0&$c_{12}$&$\frac12c_{03}$&0&0\\
   \hline
   $u^2\varphi_v$&0&0&0&0&0&0&0&\fbox{$\hat{k}_2$}&0&0&0&$\frac16\hat{c}_{31}$\\
   $uv\varphi_v$&0&0&0&0&0&0&0&0&\fbox{$\hat{k}_2$}&0&0&0\\
   $v^2\varphi_v$&0&0&0&0&0&0&0&0&0&\fbox{$\hat{k}_2$}&0&0\\
   \hline
 \end{tabular}  \vskip3mm
 \begin{tabular}{c|c|ccccc|c}
  \hline
  &$u^iv^j$ ($i+j\le3$)&$u^4$&$u^3v$&$u^2v^2$&$uv^3$&$v^4$&$u^5$\\
  \hline
  $u^3\varphi_v$&0&0&\fbox{$\hat{k}_2$}&0&0&0&0\\
  $u^2v\varphi_v$&0&0&0&\fbox{$\hat{k}_2$}&0&0&0\\
  $uv^2\varphi_v$&0&0&0&0&\fbox{$\hat{k}_2$}&0&0\\
  $v^3\varphi_v$&0&0&0&0&0&\fbox{$\hat{k}_2$}&0\\
  \hline
 \end{tabular}
 \vskip3mm 
 \begin{tabular}{c|c|cccccc}
  \hline
  &$u^iv^j$ ($i+j\le4$)&$u^5$&$u^4v$&$u^3v^2$&$u^2v^3$&$uv^4$&$v^5$\\
  \hline
  $u^4\varphi_v$&0&0&\fbox{$\hat{k}_2$}&0&0&0&0\\
  $u^3v\varphi_v$&0&0&0&\fbox{$\hat{k}_2$}&0&0&0\\
  $u^2v^2\varphi_v$&0&0&0&0&\fbox{$\hat{k}_2$}&0&0\\
  $uv^3\varphi_v$&0&0&0&0&0&\fbox{$\hat{k}_2$}&0\\
  $v^4\varphi_v$&0&0&0&0&0&0&\fbox{$\hat{k}_2$}\\
  \hline
 \end{tabular}
\end{center}
 \colB{Here, 
 \begin{align*}
  \hat{c}_{40}&=(\hat{k}_2c_{40}-3{c_{21}}^{2})/\hat{k}_2,\quad
  \hat{c}_{31}=(\hat{k}_2c_{31}-3c_{21}c_{12})/\hat{k}_2,\quad
  \hat{c}_{22}=(\hat{k}_2c_{22}-c_{21}c_{03})/\hat{k}_2,\\
  \hat{c}_{50}&=({\hat{k}_2}^2c_{50}-10\hat{k}_2c_{21}c_{31}+15{c_{21}}^{2}c_{12})/{\hat{k}_2}^{2},\quad
  \hat{c}_{41}=({\hat{k}_2}^2c_{41}-6\hat{k}_2c_{21}c_{22}+3{c_{21}}^2c_{03})/{\hat{k}_2}^2,
 \end{align*}
 and so on. The coefficients mentioned by ``$*$'' are not important.}
 \colB{The equality \eqref{eq:VerA4} holds if and only if the matrix
 presented by this table is of full rank.}  
 \colB{Using Gauss's elimination method using boxed elements as pivots,
 we conclude that $\Phi$ is $\mathcal{K}$-versal if and only if $\hat{c}_{31}\ne0$.}
 The condition $\hat{c}_{31}\ne0$ is equivalent to \colB{$3a_{12}a_{21}+a_{31}(k_1-k_2)\ne0$}
 in the original coefficients of the Monge form. 
 From Lemma \ref{lem:ridgeline}, 
 $\Phi$ is $\mathcal{K}$-versal unfolding of $\varphi$ if and only
 if $(0,0)$ is a 
 \colB{non-singular point} of the ridge line relative to ${\bf v}_1$. 
\end{proof}

\begin{proof}
 [Proof of Theorem \corR{$\ref{thm:main1}$ $(4)$}]
 From Theorem \ref{thm:CriteriaD4},
 $\varphi$ is $\mathcal K$-equivalent to $D_4^+$ 
 at $(0,0)$ if
 \[
 \hat{k}_1=\hat{k}_2=0,\text{ and }  
 \colR{\begin{vmatrix}
  c_{30}&2c_{21}&c_{12}&0\\
  0&c_{30}&2c_{21}&c_{12}\\
  c_{21}&2c_{12}&c_{03}&0\\
  0&c_{21}&2c_{12}&c_{03}
 \end{vmatrix}>0.}
 \]
 These conditions are equivalent to 
 \[
  k_1=k_2=\frac1{z_0},\text{ and }
 \begin{vmatrix}
  a_{30}&2a_{21}&a_{12}&0\\
  0&a_{30}&2a_{21}&a_{12}\\
  a_{21}&2a_{12}&a_{03}&0\\
  0&a_{21}&2a_{12}&a_{03}
 \end{vmatrix}>0 
 \]
 in the original coefficient\corC{s} of the Monge form.
 Therefore, $\varphi$ is $\mathcal{K}$-equivalent to $D_4^+$
 at $(0,0)$ if the origin is 
 \colB{a hyperbolic umbilic} (see Section
 \ref{sec:umbilics}).
 
 \colB{We assume that $\varphi$ has a $D_4^+$-singularity at $(0,0)$.}
 Since $D_4^\pm$-singularity is 3-determined,  
 $\Phi$ is $\mathcal K$-versal unfolding of $\varphi$ if and only if 
 \begin{equation}\label{VerD4}
  \mathcal E_2=
   \langle\varphi_u,\varphi_v,\varphi\rangle_{\mathcal E_2}
   +
   \langle
   {\Phi}_x|_{\BB{R}^2\times \colB{q}},
   {\Phi}_y|_{\BB{R}^2\times \colB{q}},
   {\Phi}_z|_{\BB{R}^2\times \colB{q}},
   {\Phi}_t|_{\BB{R}^2\times \colB{q}}
   \rangle_{\BB{R}}
   \colR{+\langle u,v\rangle^4.}
 \end{equation}
 The coefficients of $u^iv^j$ of functions appearing in \eqref{VerD4}
 are given by the following tables:  
 \begin{center}
 \begin{tabular}{c|c|cc|ccc|ccccc}
  \hline
  &1&$u$&$v$&$u^2$&$uv$&$v^2$&$u^3$&$u^2v$&$uv^2$&$v^3$\\
  \hline
  $\Phi_x$&0&1&0&0&0&0&0&0&0&0\\
  $\Phi_y$&0&0&1&0&0&0&0&0&0&0\\
  $\Phi_z$&$-z_0$&0&0&$\frac12k_1$&0&$\frac12k_2$&
  $\frac16a_{30}$&$\frac12a_{21}$&$\frac12a_{\corC{12}}$&$\frac16a_{\corC{03}}$\\
  $\Phi_t$&$t_0$&0&0&0&0&0&0&0&0&0\\
  \hline
  $\Phi_u$&0&0&0&
  $\frac12c_{30}$&$c_{21}$&$\frac12c_{12}$&
  $\frac16c_{40}$&$\frac12c_{31}$&$\frac12c_{22}$&$\frac16c_{13}$\\
  $\Phi_v$&0&0&0&$\frac12c_{21}$&$c_{12}$&$\frac12c_{03}$&
  $\frac16c_{31}$&$\frac12c_{22}$&$\frac12c_{13}$&$\frac16c_{04}$
  \\
  \hline
  $u\Phi_u$&0&0&0&0&0&0&$\frac12c_{30}$&$c_{21}$&$\frac12c_{12}$&0\\ 
  $v\Phi_u$&0&0&0&0&0&0&0&$\frac12c_{30}$&$c_{21}$&$\frac12c_{12}$\\
  $u\Phi_v$&0&0&0&0&0&0&$\frac12c_{21}$&$c_{12}$&$\frac12c_{03}$&0\\
  $v\Phi_v$&0&0&0&0&0&0&0&$\frac12c_{21}$&$c_{12}$&$\frac12c_{03}$\\
  \hline
 \end{tabular}
 \end{center}
 Thus we obtain that $\Phi$ is $\mathcal K$-versal if and only if 
 $$
 \begin{vmatrix}
  1&0&1\\
  c_{30}&c_{21}&c_{12}\\
  c_{21}&c_{12}&c_{03}
 \end{vmatrix}
 \ne0.
 $$
 This condition is equivalent to
 $$
 \begin{vmatrix}
  1&0&1\\
  a_{30}&a_{21}&a_{12}\\
  a_{21}&a_{12}&a_{03}
 \end{vmatrix}
 \ne0
 $$
 in the original coefficient\corC{s} of the Monge form.
 \colB{This condition is equivalent to the origin is not a right-angled
 umbilic.} 
 Hence, we complete the proof.
\end{proof}


\bigskip
\address{
Department of Mathematics\\ 
Faculty of Science\\
Saitama University\\
255 Shimo-Okubo, Sakura-Ku\\
Saitama 338-8570\\
Japan
}
{tfukui@rimath.saitama-u.ac.jp}
\address{
Department of Mathematics\\ 
Faculty of Science\\
Saitama University\\
255 Shimo-Okubo, Sakura-Ku\\
Saitama 338-8570\\
Japan
}
{s07dm003@mail.saitama-u.ac.jp}


\end{document}